\definecolor{light-gray}{gray}{0.80}
\definecolor{light-gray2}{gray}{0.70}
\newtheorem{theorem}{Theorem}[section]
\newtheorem{proposition}[theorem]{Proposition}
\newtheorem{corollary}[theorem]{Corollary}
\theoremstyle{definition}
\newtheorem{definition}[theorem]{Definition}
\newtheorem{example}[theorem]{Example}
\newtheorem{remark}[theorem]{Remark}
\def\disp{\displaystyle}
\def\Z{\operatorname{\mathbb{Z}}}
\def\C{\operatorname{\mathbb{C}}}
\def\P{\operatorname{\mathbb{P}}}
\def\div{\operatorname{{\rm Div}}}
\def\pic{\operatorname{{\rm Pic}}}
\def\cX{\operatorname{\mathcal{X}}}
\def\cY{\operatorname{\mathcal{Y}}}
\def\cZ{\operatorname{\mathcal{Z}}}
\def\cE{\operatorname{\mathcal{E}}}
\def\ve{\varepsilon}
\begin{document}
\begin{center}
{\large {\bf Space of initial conditions and geometry of two 4-dimensional discrete Painlev\'e equations}}
\end{center}

\begin{center}

{\bf Adrian Stefan Carstea${}^\dagger$  and Tomoyuki Takenawa${}^\ddag$}\\
\medskip

${}^\dagger$ Department of Theoretical Physics, Institute of Physics and Nuclear Engineering\\ Reactorului 15, 077125, Magurele, Bucharest, Romania\\
E-mail: carstea@gmail.com\\[5pt]

${}^\ddag$ Faculty of Marine Technology, Tokyo University of Marine Science and Technology\\ 2-1-6 Etchu-jima, Koto-ku, Tokyo, 135-8533, Japan\\
E-mail: takenawa@kaiyodai.ac.jp

\end{center}
\begin{abstract}
A geometric study of two 4-dimensional mappings is given. By the resolution of indeterminacy they are lifted to pseudo-automorphisms 
of rational varieties obtained from $(\P^1)^4$ by blowing-up along sixteen 2-dimensional 
subvarieties. The symmetry groups,  the invariants and the degree growth rates are computed from the linearisation on the corresponding N\'eron-Severi bilattices.
It turns out that the deautonomised version of one of the mappings is a B\"acklund transformation of a direct product of the fourth Painlev\'e equation which has $A_2^{(1)}+A_2^{(1)}$ type affine Weyl group symmetry, while 
that of the other mapping is of Noumi-Yamada's $A_5^{(1)}$ Painlev\'e equation.
\end{abstract}

\section{Introduction}

The Painlev\'e equations are nonlinear second-order ordinary differential equations whose solutions are  meromorphic except some fixed points, but not reduced to known functions such as solutions of 
linear ordinary differential equations or Abel functions.
The discrete counterpart of Painlev\'e equations were introduced by Grammaticos, Ramani and their collaborators \cite{GRP1991, RGH1991} using so called the singularity confinement criterion. 
Since this criterion is not a sufficient condition for the mapping to be integrable, the notion of algebraic entropy was introduced by Hietarinta and Viallet \cite{HV1998} 
and studied geometrically in \cite{BV1999, Takenawa2001, Mase2018}. 
This entropy is essentially the same with topological entropy \cite{Gromov2003, Yomdin1987}.

Discrete Painlev\'e equations share many properties with the differential case, e.g., the existence of special solutions, such as algebraic solutions, or solutions expressed in terms of special functions, 
affine Weyl group symmetries and the geometric classification of equations in terms of rational surfaces.
Among them, associated families of rational surfaces, called the spaces of initial conditions, were introduced by Okamoto \cite{Okamoto1977} for the continuous case, and by Sakai \cite{Sakai2001} for the discrete case, 
where an equation gives a flow on a family of smooth projective rational surfaces. The cohomology group of the space of initial conditions gives information about the symmetries of the equation \cite{Sakai2001} and its degree growth
\cite{Takenawa2001}.

In recent years, research on four dimensional Painlev\'e systems has been progressed mainly from the viewpoint of isomonodromic deformation of linear equations \cite{Sakai2010,KNS2012},
while the space of initial conditions in Okamoto-Sakai's sense was known only for few equations. The difficulty lies in the part of using higher dimensional algebraic geometry. In the higher dimensional case the center of 
blowups is not necessarily a point but could be a subvariety of codimension two at least. 
Although some studies on symmetries of varieties or dynamical systems have been reported in the higher dimensional case, most of them consider only the case where varieties are obtained by blowups at points 
from the projective space \cite{DO1988, Takenawa2004, BK2008}. One of few exceptions is \cite{TT2009}, where
varieties obtained by blowups along codimension three subvarieties from the direct product of a projective line $(\P^1)^N$ were studied.

In this paper, starting with a mapping  $\varphi:\C^4\to\C^4;
(q_1, q_2, p_1, p_2)\mapsto (\bar{q}_1, \bar{q}_2, \bar{p}_1, \bar{p}_2)$:
\begin{align}\label{Map1}
A_2^{(1)}+A_2^{(1)}: &\left\{\begin{array}{rcl}
\bar{q}_1&=&-p_2-q_2+aq_2^{-1}+b\\
\bar{p}_1&=&q_2 \\
\bar{q}_2&=&-q_1-p_1+aq_1^{-1}+b\\
\bar{p}_2&=&q_1 
\end{array} \right.
\end{align}
and its slight modification: 
\begin{align} \label{Map2}
A_5^{(1)}:& \left\{\begin{array}{rcl} 
\bar{q}_1&=&-q_1-p_2+aq_2^{-1}+b_1\\
\bar{p}_1&=&q_2\\
\bar{q}_2&=&-q_2-p_1+aq_1^{-1}+b_2\\
\bar{p}_2&=&q_1\\
\end{array} \right.,
\end{align}
we construct their spaces of initial conditions, where the mappings are lifted to pseudo-automorphisms (automorphisms except finite number of subvarieties of codimension 2 at least).
We also give their deautonomisations and compute their degree growth.
It turns out that deautonomised version of mapping \eqref{Map1} is a B\"acklund transformation of a direct product of the fourth Painlev\'e equation, which has two continuous variables and $A_2^{(1)}+A_2^{(1)}$ (direct product) type symmetry, while 
that of mapping \eqref{Map2} is a B\"acklund transformation of Noumi-Yamada's $A_5^{(1)}$ Painlev\'e equation \cite{NY1998}, 
which has only one continuous variable and $A_5^{(1)}$ type symmetry. 
Although these equations might seem rather trivial compared to the Garnier systems, the Fuji-Suzuki system \cite{FS2010} or the Sasano system \cite{SY2007, Sasano2008}, 
we believe that they provide typical models for geometric studies on higher dimensional Painlev\'e systems.

The key tools of our investigation are pseudo-isomorphisms and N\'eron-Severi 
bilattices. In the autonomous case, for a given birational mapping, we successively blow-up a smooth projective rational variety along 
subvarieties to which a divisor is contracted. If this procedure terminate, the mapping is lifted to a pseudo-automorphism on a rational variety. 
In the non-autonomous case, the given sequence of 
mappings are lifted to a sequence of pseudo-isomorphisms between rational varieties.
We refer to those obtained rational varieties as the space of initial conditions (in Okamoto-Sakai's sense).  In this setting, the N\'eron-Severi bilattices play the role of root lattices of affine Weyl groups.\\

Let us make some remarks on the mappings.
Mapping \eqref{Map1} can be written in a simpler way as,
$$\overline{y_1}+y_2+\underline{y_1}-ay_2^{-1}-b=0$$
$$\overline{y_2}+y_1+\underline{y_2}-ay_1^{-1}-b=0,$$
where $y_1=q_1$, $y_2=q_2$, $\underline{y_1}=p_2$, $\underline{y_2}=p_1$
and the over/under bar denotes the image/preimage by the mapping.
As can be seen easily, when $y_1=y_2$, this system is one of the Quispel-Roberts-Thompson mappings\cite{QRT1989}.  
This fact enables us to find that Mapping \eqref{Map1} is the compatibility condition:
$$\overline{L}M-ML=0$$ 
for the Lax pair $L\Phi=h\Phi$, $ \overline{\Phi}=M\Phi$ with  
$$L=
\begin{bmatrix}
0&y_1&1&0&0&0\\
h&-a&b-y_1-\underline{y_2}&0&0&0\\
h\underline{y_2}&h&-a&0&0&0\\
0&0&0&0&y_2&1\\
0&0&0&h&-a&b-y_2-\underline{y_1}\\
0&0&0&h\underline{y_1}&h&-a
\end{bmatrix}
$$
and 
$$M=\begin{bmatrix}
0&0&0&a/y_2&1&0\\
0&0&0&0&0&1\\
0&0&0&h&0&0\\
a/y_1&1&0&0&0&0\\
0&0&1&0&0&0\\
h&0&0&0&0&0
\end{bmatrix},
$$
where $h$ is the spectral parameter.

Since $\overline{L}$ and $L$ are similar matrices, their characteristic polynomials are the same.
From the coefficients of the characteristic polynomial  ${\rm det}(x-L)$ with respect to $x$ and $h$, 
we have conserved quantities
\begin{align*}
I_1+I_2\mbox{ and } I_1I_2,
\end{align*}    
where
\begin{align}\label{I12}
&\begin{array}{rcl}
I_1&=& q_1p_1(q_1+p_1-b)-a(q_1+p_1)\\
I_2&=& q_2p_2(q_2+p_2-b)-a(q_2+p_2).\\
\end{array}
\end{align}

On the other hand, we do not know the Lax pair for Mapping \eqref{Map2}. However, using the space of initial conditions, we find two conserved quantities:
\begin{align}
I_1=& (q_1 p_1 -q_2 p_2)^2+b_1 b_2 (q_1 p_1+q_2 p_2)\nonumber \\&
      + b_1 \left( a (p_1 + q_2)-q_1 p_1^2-q_2^2p_2\right)  +b_2\left(a (q_1 + p_2)-q_1^2 p_1-q_2 p_2^2\right)  
    \nonumber\\
I_2=&
  (a (q_1 + p_2) + q_1 p_2 (b_2 - q_2 - p_1)) ( 
    a (q_2 + p_1)+ q_2 p_1 (b_1 - q_1 - p_2) ).  \label{I12.2}
\end{align}

Both mappings preserve the symplectic form $dq_1\wedge dp_1+dq_2\wedge dp_2 $ and thus the volume 
form $dq_1\wedge dp_1\wedge dq_2\wedge dp_2=\frac{1}{2}(dq_1\wedge dp_1+dq_2\wedge dp_2)^2$, whose coefficients give the canonical divisor class that corresponds 
to the above conserved quantities.   
Moreover, these conserved quantities $I_1$ and $I_2$ give the following continuous Hamiltonian flows:
\begin{align*}
 \frac{dq_1}{dt}=\frac{\partial I_i}{\partial p_1}, &\quad  \frac{dp_1}{dt}=- \frac{\partial I_i}{\partial q_1}\\ 
 \frac{dq_2}{dt}=\frac{\partial I_i}{\partial p_2}, &\quad  \frac{dp_2}{dt}=- \frac{\partial I_i}{\partial q_2}
\end{align*}
 commuting with each other:
\begin{align*}
 \{I_1,I_2\}= \left(\frac{\partial I_1}{\partial q_1}\frac{\partial I_2}{\partial p_1}
-\frac{\partial I_1}{\partial p_1}\frac{\partial I_2}{\partial q_1}\right)+
\left(\frac{\partial I_1}{\partial q_2}\frac{\partial I_2}{\partial p_2}
-\frac{\partial I_1}{\partial p_2}\frac{\partial I_2}{\partial q_2}\right)=0.
\end{align*}

As commented at the beginning,  another feature that indicates integrability of the mappings is the low degree growth rate,
where the degree of the $n$-th iterate of a mapping $\varphi^n$ is the degree with respect to the initial variables. 
Precise definition is given in the next section, but it is easily seem by numerical computation that the degree growth rate for both mappings
is quadratic.\\

This paper is organised as follows. 
In Section 2 we recall basic facts about the algebraic geometry used in this paper.
In Section 3 the singularity confinement test is applied for the above mappings.
In Section 4 we construct the spaces of initial conditions, where the mappings are lifted to  pseudo-automorphisms, and compute the actions on the Neron-S\'everi bilattices.
The degree growth is also computed for these actions.
In Section 5 symmetries of the spaces of initial conditions are studied. Deautonomised mappings are also given. 
In Section 6 we find Hamiltonians for continuous Painlev\'e equations defined on the above spaces of initial conditions.
Section 7 is devoted to discussion on generalisation of the results.\\

\noindent 
{\it Notation}: Throughout this paper, we often denote $x_{i_1}+x_{i_2}+\dots+x_{i_n}$ by $x_{i_1,i_2,\dots,i_n}$, where $x$ can be replaced by any symbols like $y, z, A, B, C$ etc.

\section{Algebraic stability and pseudo-isomorphisms}

A rational map $f:\P^n \to \P^n$ is given by $(n+1)$-tuple of homogeneous polynomials having the 
same degree (without common polynomial factor). Its degree, $\deg(f)$, is defined as the common degree of the $f_j$'s. We are interested 
in to compute $\deg(f^n)$, but it is not easy, since it only holds that $\deg(f^n)\leq (\deg f)^n$ in general by cancellation of common factors.
A related object is the indeterminacy set of $f$ given by
$$I(f)=\{{\bf x}\in\P^n~|~f_0({\bf x})=\dots=f_n({\bf x})=0\}$$
that is a subvariety of codimension 2 at least, whereas $f$ defines a holomorphic mapping $f:\P^n\setminus I(f)\to\P^n$. 
In this section we recall basic facts in algebraic geometry used in this kind of study.\\

\noindent {\it Rational correspondence}\\
Let $\cX$ and $\cY$ be smooth projective varieties of dimension $N$ and $f:\cX \to \cY$ a dominant rational map. Using the completion of the graph of $f$, $G_f$, we can decompose $f$ as $f=\pi_{\cY} \circ \pi_{\cX}^{-1}$ such that
$\pi_{\cX}: G_f \to \cX$ and $\pi_{\cY}: G_f \to \cY$ are rational morphisms 
and the equality holds for generic points in $\cX$. 
 
This definition is simple but practically may arise complications in computing defining polynomials of the graph.
For example, when $\cX$ and $\cY$ are rational varieties and 
$(x_1,\dots,x_N)$ and $(y_1,\dots,y_N)$ are their local coordinates,
introducing homogeneous coordinates as $(X_0:\dots:X_N)=(X_0:X_0x_1:\dots:X_0x_N)$ and 
$(Y_0:\dots:Y_N)=(Y_0:Y_0y_1:\dots:Y_0y_N)$, we can only say that the graph $G_f$ is ``one of the components'' of $Y_k p_l(X_0,\cdots,X_N) =Y_l p_k(X_0,\cdots,X_N)$, $k,l=0,\dots,N$, where 
$(y_1,\dots,y_N)=(p_1/p_0,\dots,p_N/p_0)$ is the induced homogeneous map and
$(X_0:\dots:X_N; Y_0:\dots:Y_N)$  is the coordinate system of $\P^N\times \P^N$ (\S5 of \cite{Bedford2003} and Example 3.4 of \cite{Roeder2015} are examples of such complication).

Hironaka's singularity resolution theorem (Question (E) in \S~0.5 of \cite{Hironaka1964}) also gives this decomposition in a more tractable form as: there exists a sequence of blowups $\pi:\tilde{\cX}\to \cX$
along smooth centers in $I(f)$ such that the induced rational map $\tilde{f} : \tilde{\cX} \to \cY$
is a morphism.
 
Using these decompositions we can define the push-forward and the pull-back correspondence of a sub-variety by $f$ as $f_c(V)=\pi_{\cY} \circ \pi_{\cX}^{-1} (V)=\tilde{f} \circ \pi^{-1} (V)$ for $V \subset \cX$ and $f_c^{-1}(W)=\pi_{\cX} \circ \pi_{\cY}^{-1} (W)=\pi \circ \tilde{f}^{-1} (W)$ for $W\subset \cY$. We denote their restriction to divisor groups by $f_*:\div(\cX)\to \div(\cY)$ and $f^*:\div(\cY)\to \div(\cX)$, where lower dimensional subvarieties are ignored as zero divisors.
Especially, when $f$ is birational, it obviously holds that $f_*=(f^{-1})^*$ and $f^*=(f^{-1})_*$. \\
  
\noindent {\it Algebraic stability}\\
The following proposition is fundamental to our study. Its two dimensional version was shown by
Diller and Favre (Proposition~1.13 of [DF01]) . ``If'' part was shown by Bedford-Kim (Theorem~1.1 of \cite{BK2008}) and  Roeder (Proposition~1.5 of \cite{Roeder2015}), while ``only if'' part by Bayraktar (Theorem~5.3 of \cite{Bayraktar2012}). 

\begin{proposition}\label{AS1}
Let $f:\cX\to \cY$ and $g:\cY\to \cZ$ be dominant rational maps. Then
$f^*\circ g^* =  (g\circ f)^*$ 
holds if and only if there does not exist a prime divisor $D$ on $\cX$ such that $f(D\setminus I(f))\subset I(g)$.
\end{proposition}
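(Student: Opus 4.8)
The plan is to reduce the two directions to a single \emph{master identity} computed on one common resolution, and then to read both implications off the support of one explicit discrepancy divisor. Since $f^*\circ g^*$ and $(g\circ f)^*$ are both additive on $\div(\cZ)$, by linearity it suffices to test the identity on an arbitrary prime divisor $E\subset\cZ$. By Hironaka's theorem I first fix a smooth projective variety $W$ together with a birational morphism $p\colon W\to\cX$ and morphisms $q=f\circ p\colon W\to\cY$ and $r=(g\circ f)\circ p\colon W\to\cZ$, chosen large enough that $p$ resolves both $f$ and $g\circ f$ (so that its exceptional divisors lie over $I(f)\cup I(g\circ f)$). Because $r=g\circ q$ as rational maps, the morphism $(q,r)$ factors through the graph $G_g$, yielding a morphism $\mu\colon W\to G_g$ with $\rho_\cY\circ\mu=q$ and $\rho_\cZ\circ\mu=r$, where $\rho_\cY\colon G_g\to\cY$ and $\rho_\cZ\colon G_g\to\cZ$ are the two projections resolving $g$.

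On this model the definitions of the excerpt read $(g\circ f)^*E=p_*(r^*E)$ and $f^*g^*E=p_*\bigl(q^*(g^*E)\bigr)$, where $r^*$ and $q^*$ are honest morphism pull-backs. Because $\rho_\cY$ is birational, on $G_g$ one has $\rho_\cZ^*E=\rho_\cY^*(g^*E)+\mathcal E$, with $\mathcal E$ supported on the $\rho_\cY$-exceptional divisors, i.e.\ on divisors lying over $I(g)$. Pulling this back by $\mu$ and using $\rho_\cY\circ\mu=q$ gives $r^*E=q^*(g^*E)+\mu^*\mathcal E$, and pushing forward by $p_*$ yields
\[ (g\circ f)^*E \;=\; f^*g^*E \;+\; p_*\mu^*\mathcal E . \]
Hence the two pull-backs coincide for every $E$ precisely when $p_*\mu^*\mathcal E=0$.

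The support of $\mu^*\mathcal E$ consists of prime divisors $D_W\subset W$ with $q(D_W)\subset I(g)$, and such a $D_W$ survives in $p_*\mu^*\mathcal E$ only if it is not $p$-exceptional, in which case $D:=p(D_W)$ is a prime divisor of $\cX$ whose generic point satisfies $f(D\setminus I(f))=q(D_W)\subset I(g)$. This gives the ``if'' part at once: if no prime divisor $D$ on $\cX$ is contracted by $f$ into $I(g)$, then every component of $\mu^*\mathcal E$ is $p$-exceptional, so $p_*\mu^*\mathcal E=0$ and equality holds. For the ``only if'' part I argue contrapositively from the same identity: given a prime divisor $D$ with $f(D\setminus I(f))\subset I(g)$, its strict transform $D_W$ is not $p$-exceptional and $\mu(D_W)$ lies inside some $\rho_\cY$-exceptional prime divisor $\Gamma$ of $G_g$; taking $E:=\rho_\cZ(\Gamma)$, the divisor in $\cZ$ that $g$ produces out of $I(g)$, forces a nonzero coefficient of $\Gamma$ in $\mathcal E=\rho_\cZ^*E-\rho_\cY^*(g^*E)$, which then propagates to a nonzero coefficient of $D$ in $p_*\mu^*\mathcal E$, so that $(g\circ f)^*E\neq f^*g^*E$.

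The routine ingredients---existence of $W$, the splitting $\rho_\cZ^*E=\rho_\cY^*(g^*E)+\mathcal E$, and independence of all push-forwards from the auxiliary blow-ups (the ambiguity being only $p$-exceptional, hence killed by $p_*$)---are standard. The genuine difficulty is the final step of the ``only if'' direction: one must guarantee that the exceptional divisor $\Gamma$ through $\mu(D_W)$ is not itself contracted by $\rho_\cZ$, so that $E=\rho_\cZ(\Gamma)$ is an honest divisor, and that its multiplicity $c$ together with the ramification of $\mu$ along $D_W$ combine to a nonzero total rather than cancelling. Controlling these multiplicities---as opposed to merely tracking supports---is the heart of the matter, and is exactly where the effective bound $f^*g^*\ge(g\circ f)^*$ and the cited results of Bedford--Kim, Roeder and Bayraktar enter.
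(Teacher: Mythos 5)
Your ``if'' direction is correct, but it is a genuinely heavier route than the paper's: the paper simply quotes Bedford--Kim's observation that $(g\circ f)^*(E)$ and $f^*(g^*(E))$ agree as divisors on $\cX-I(f)-f_c^{-1}(I(g))$, and that under the hypothesis this open set has complement of codimension at least $2$, so the two divisors coincide. Your resolution-and-graph setup, culminating in the identity $(g\circ f)^*E=f^*g^*E+p_*\mu^*\mathcal{E}$ with $\mathcal{E}$ supported over $I(g)$, reproves the same easy half with more machinery (and with some glossed-over technicalities: $\mu$ is only a morphism if $G_g$ is taken to be the possibly singular graph closure, and the claim that $\mu(D_W)$ sits inside an exceptional \emph{divisor} needs purity of the exceptional locus). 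The machinery would pay for itself only if it delivered the converse, which is exactly where the proposal stops being a proof.

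For the ``only if'' direction there is a genuine gap, and you say so yourself: you never exhibit a prime divisor $E$ on $\cZ$ with $p_*\mu^*\mathcal{E}\neq 0$. The obstructions you list --- that the exceptional divisor $\Gamma\supset\mu(D_W)$ may itself be contracted by $\rho_{\cZ}$, that the coefficient of $\Gamma$ in $\mathcal{E}=\rho_{\cZ}^*E-\rho_{\cY}^*(g^*E)$ may vanish since $\mathcal{E}$ carries no sign in general, and that several components of $\mu^*\mathcal{E}$ lying over the same $D$ may cancel under $p_*$ --- are precisely the mathematical content of this implication; deferring them to ``the cited results'' means the converse is cited, not proved. (To be fair, the paper does no better: it attributes this half to Bayraktar and offers no argument.) Worse, the ``effective bound $f^*g^*\geq(g\circ f)^*$'' that you invoke to rule out cancellation is false as stated for individual prime divisors: in the paper's own Example~2.2, with $F=g^{-1}$ and $G=f^{-1}$, one has $F^*G^*(E_{\cX})=0$ while $(G\circ F)^*(E_{\cX})=E_{\cZ}$, so the inequality can go the other way on non-nef classes. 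The correct positivity statement concerns nef or pseudoeffective classes, and upgrading it to the strict inequality required here is the nontrivial step. In summary: the easy half is correct but takes a longer road than the paper's two-line support argument, and the hard half is not established.
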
 

Since the proof of ``if'' part is very simple, it would be convenient to quote from \cite{BK2008}, modifying it to fit our terminologies:
\begin{quote} 
If $D$ is a divisor on $\cZ$ then $g^*(D)$ is a divisor on $\cY$ which is the same as $g_c^{-1}(D)$ on $\cY-I(g)$ by ignoring codimension greater than one. Since $I(g)$ 
has codimension at least 2 we also have $(g\circ f)^*(D)=f^*(g^*(D))$ on $\cX-I(f)-f_c^{-1}(I(g))$. By the hypothesis $f_c^{-1}(I(g))$ has codimension at least 2. Thus we have $(g\circ f)^*(D)=f^*g^*(D)$ on $\cX$.
\end{quote}

\begin{example}\label{EX1}
Let $(x_0:x_1,x_2:x_3)$ be the homogeneous coordinate system of the complex projective space $\P^3$. 
Let $\cX$ be a variety obtained by blowing up $\P^3$ along the line $x_1=x_2=0$,
$\cY$ be  $\P^3$, $\cZ$ be a variety obtained by blowing up $\P^3$ at the point $x_1=x_2=x_3=0$, and $f:\cX\to \cY$ and $g:\cY\to \cZ$ be the identity map on $\P^3$. 
Let $H$, $E_{\cX}$ and $E_{\cZ}$ denote the class of the total transform of the hyperplane, the exceptional divisors of $\cX$ and $\cZ$ respectively. 
Then it holds that $I(f)=\emptyset$, $I(g)=\{(1:0:0:0)\}$ and there is no prime divisor $D \in \cX$ such that $f(D) \subset I(g)$, while    
$I(f^{-1})=\{(s:0:0:t)~|~(s:t)\in \P^1 \}$, $I(g^{-1})=\emptyset$ and $g^{-1}(E_{\cZ}) \subset I(f^{-1})$.
Thus, $f^*g^*=(g\circ f)^*$ holds, but not $(g^{-1})^*(f^{-1})^*=(f^{-1}\circ f^{-1})^*$  (see Fig.~\ref{fig.EX1}).

The pull-backs acts on divisor classes as 
\begin{align*}
f^*:& H \mapsto H\\
g^*:& H \mapsto H, \quad E_{\cZ} \mapsto 0\\
f^* g^*:& H \mapsto H, \quad E_{\cZ} \mapsto 0\\
(g\circ f)^*:& H \mapsto H,\quad E_{\cZ} \mapsto 0\\
(f^{-1})^*:& H \to H,\quad E_{\cX} \to 0\\
(g^{-1})^*:& H \to H\\
(g^{-1})^* (f^{-1})^*:& H \mapsto H, \quad E_{\cX} \mapsto 0\\
(f^{-1}\circ g^{-1})^*:& H \mapsto H,\quad E_{\cX} \mapsto E_{\cZ}.
\end{align*}
In particular, for the anti-canonical divisor classes $-K_{\cX}=4 H- E_{\cX}$, 
 $-K_{\cY}=4 H$ and  $-K_{\cZ}=4 H- 2E_{\cZ}$,
$(g^{-1})^* (f^{-1})^*(-K_{\cX})=4 H$ is greater than $(f^{-1}\circ g^{-1})^*(-K_{\cX})=4 H- E_{\cZ}$. 
\end{example}

\begin{figure}
\begin{center}
\begin{tikzpicture}[line cap=round,line join=round,>=triangle 45,x=0.6cm,y=0.6cm]
\clip(-2,-3) rectangle (22,6);
\draw [line width=1.pt] (0.98,-1.84)-- (5.,0.);
\draw [line width=1.pt] (1.0134970368153247,3.0003218198144275)-- (1.6670448368751865,3.299458126309488);
\draw [line width=1.pt] (1.6670448368751865,3.299458126309488)-- (-0.8619814426818323,0.);
\draw [line width=1.pt] (-0.8619814426818323,0.)-- (-1.5155292427416949,-0.29913630649506107);
\draw [line width=1.pt] (-1.5155292427416949,-0.29913630649506107)-- (1.0134970368153247,3.0003218198144275);
\draw [line width=1.pt] (0.98,-1.84)-- (-1.5155292427416949,-0.29913630649506107);
\draw [line width=1.pt] (1.0134970368153247,3.0003218198144275)-- (0.98,-1.84);
\draw [line width=1.pt] (5.,0.)-- (-0.8619814426818323,0.);
\draw [line width=1.pt] (5.,0.)-- (1.6670448368751865,3.299458126309488);
\draw [->,line width=2.pt] (4.5,2.) -- (6.5,2.);
\draw [line width=1pt] (-2+8.5,0.)-- (0.98+8.5,-1.84);
\draw [line width=1pt] (0.98+8.5,-1.84)-- (1.02+8.5,3.94);
\draw [line width=1pt] (1.02+8.5,3.94)-- (5+8.5,0.);
\draw [line width=1pt] (5+8.5,0.)-- (-2.+8.5,0.);
\draw [->,line width=1.pt] (-2.+8.5,0.) -- (-0.9888551724137928+8.5,-0.6243310344827588);
\draw [->,line width=1.pt] (-2.+8.5,0.) -- (-0.26862385321100923+8.5,0.);
\draw [line width=1.pt] (1.02+8.5,3.94)-- (-2.+8.5,0.);
\draw [line width=1.pt] (0.98+8.5,-1.84)-- (5.+8.5,0.);
\draw [->,line width=1.pt] (-1.9636286317156306+8.5,0.04745138776172701) -- (-1.226250608667424+8.5,1.009461126440513);
\begin{scriptsize}
\draw[color=black] (-1.41+8.5,-1) node {$x_1/x_0$};
\draw[color=black] (-0.3+8.5,0.5) node {$x_2/x_0$};
\draw[color=black] (-2+8.5,1.2) node {$x_3/x_0$};
\draw[color=black] (-2+8.5,1.2) node {$x_3/x_0$};
\end{scriptsize}
\draw[color=black] (5.3,2.5) node {$f$};
\draw[color=black] (5.3+8.5,2.5) node {$g$};
\draw[color=black] (-1, 1.5) node {$E_{{\mathcal X}}$};
\draw[color=black] (-2+17, 0) node {$E_{{\mathcal Z}}$};
\draw [->,line width=2.pt] (4.5+8.5,2.) -- (6.5+8.5,2.);
\draw [line width=1pt] (0.98+17,-1.84)-- (1.02+17,3.94);
\draw [line width=1pt] (1.02+17,3.94)-- (5.+17,0.);
\draw [line width=1.pt] (0.98+17,-1.84)-- (5.+17,0.);
\draw [line width=1.pt] (1.02+17,3.94)-- (-1.2143392306443759+17,1.0250011361791915);
\draw [line width=1.pt] (-0.17893199155981188+17,0.)-- (-1.2247453335497485+17,-0.4786807336471354);
\draw [line width=1.pt] (-1.2143392306443759+17,1.0250011361791915)-- (-1.2247453335497485+17,-0.4786807336471354);
\draw [line width=1.pt] (-1.2143392306443759+17,1.0250011361791915)-- (-0.17893199155981188+17,0.);
\draw [line width=1.pt] (-1.2247453335497485+17,-0.4786807336471354)-- (0.98+17,-1.84);
\draw [line width=1.pt] (-0.17893199155981188+17,0.)-- (5.+17,0.);
\end{tikzpicture}
\caption{Example~\ref{EX1}}
    \label{fig.EX1}
\end{center}
\end{figure}

A rational map $\varphi$ from a smooth projective variety $\cX$ to itself 
is called \textit{algebraically stable} or \textit{1-regular} if 
$(\varphi^*)^n=(\varphi^n)^*$ holds \cite{FS1995}. 
The following proposition is obvious from Proposition~\ref{AS1}.
 
\begin{proposition}\label{AS2}
A rational map $\varphi$ from a smooth projective variety $\cX$ to itself  
is algebraically stable if and only if
there does not exist a positive integer $k$ and a divisor $D$ on $\cX$ such that $f(D\setminus I(f))\subset I(f^k)$.
\end{proposition}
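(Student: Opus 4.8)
The plan is to derive Proposition~\ref{AS2} as a direct specialization of Proposition~\ref{AS1}, taking $\cX=\cY=\cZ$ and the maps to be suitable iterates of $\varphi$. First I would set up the notation: algebraic stability means $(\varphi^*)^n=(\varphi^n)^*$ for all $n\geq 1$. I would prove both directions by induction, reducing the $n$-fold functoriality to the two-fold composition statement that Proposition~\ref{AS1} controls. The key algebraic identity I want to exploit is that $(\varphi^n)^*=(\varphi^{n-1})^*\circ\varphi^*$ holds exactly when applying Proposition~\ref{AS1} to the pair $f=\varphi$ and $g=\varphi^{n-1}$ (so that $g\circ f=\varphi^n$), and the obstruction to this equality is precisely the existence of a prime divisor $D$ with $\varphi(D\setminus I(\varphi))\subset I(\varphi^{n-1})$.

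For the ``if'' direction (no bad divisor implies algebraic stability), I would argue by induction on $n$. The base case $n=1$ is trivial. Assuming $(\varphi^*)^{n-1}=(\varphi^{n-1})^*$, I apply Proposition~\ref{AS1} with $f=\varphi$, $g=\varphi^{n-1}$: since by hypothesis there is no $k$ (in particular $k=n-1$) and no divisor $D$ with $\varphi(D\setminus I(\varphi))\subset I(\varphi^k)$, the contraction condition of Proposition~\ref{AS1} fails, so $\varphi^*\circ(\varphi^{n-1})^*=(\varphi^n)^*$. Combining with the inductive hypothesis gives $(\varphi^*)^n=\varphi^*\circ(\varphi^*)^{n-1}=\varphi^*\circ(\varphi^{n-1})^*=(\varphi^n)^*$, closing the induction.

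For the ``only if'' direction I would argue contrapositively: suppose there exist a positive integer $k$ and a prime divisor $D$ on $\cX$ with $\varphi(D\setminus I(\varphi))\subset I(\varphi^k)$. I would choose $k$ minimal with this property, so that $(\varphi^j)^*=(\varphi^*)^j$ holds for all $j\leq k$ by the ``if'' reasoning applied up to stage $k$. Then Proposition~\ref{AS1} applied to $f=\varphi$, $g=\varphi^k$ shows that $\varphi^*\circ(\varphi^k)^*\neq(\varphi^{k+1})^*$, because the offending divisor $D$ realizes the contraction $\varphi(D\setminus I(\varphi))\subset I(\varphi^k)=I(g)$ that Proposition~\ref{AS1} identifies as the exact failure condition. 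Substituting $(\varphi^k)^*=(\varphi^*)^k$ yields $(\varphi^*)^{k+1}\neq(\varphi^{k+1})^*$, so $\varphi$ is not algebraically stable.

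The main obstacle I anticipate is the bookkeeping between the two formulations of the bad-divisor condition: Proposition~\ref{AS1} is phrased for a single composition $g\circ f$ with its own indeterminacy locus $I(g)$, whereas Proposition~\ref{AS2} quantifies over all iterates $\varphi^k$ simultaneously. I must verify that the indeterminacy set of the composite map used as $g$ in each inductive step genuinely coincides with $I(\varphi^{n-1})$ in the sense relevant to the contraction hypothesis, and that minimality in the ``only if'' direction legitimately allows me to assume $(\varphi^k)^*=(\varphi^*)^k$ before deducing the failure at stage $k+1$. Once this alignment is made precise, the statement follows essentially formally from Proposition~\ref{AS1}, since it is, as the paper notes, \emph{obvious} from it.
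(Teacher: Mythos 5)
Your proposal is correct and follows exactly the route the paper intends: the paper offers no written proof beyond declaring Proposition~\ref{AS2} ``obvious from Proposition~\ref{AS1}'', and your induction on $n$ (applying Proposition~\ref{AS1} to $f=\varphi$, $g=\varphi^{n-1}$ for the ``if'' direction, and to $f=\varphi$, $g=\varphi^{k}$ with $k$ minimal for the ``only if'' direction) is precisely the formalization of that remark. The alignment issue you flag resolves itself, since $I(\varphi^{n-1})$ in the hypothesis of Proposition~\ref{AS2} is by definition the indeterminacy set of the iterate viewed as a rational map, which is exactly the $I(g)$ appearing in Proposition~\ref{AS1}.
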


\noindent {\it Pseudo-isomorphisms and N\'eron-Severi bilattices}\\
For a smooth projective variety $\cX$, 
the N\'eron-Severi lattice $N^1(\cX) =  \pic(\cX)/\pic^0(\cX) \subset H^2(\mathcal{X},\Z)$, where $\pic^0(\cX)$ is the connected comonent of the Picard group, is the fist Chern class of the Picard group $c_1:\pic(\cX)\to H^2(\cX,\Z) $. This lattice and its Poincar\'e dual $N_1(\cX) \subset H_2(\mathcal{X},\Z)$ are finitely generated lattices.

We call a birational mapping $\varphi:\cX\to \cY$ a {\it pseudo-isomorphism}
if $\varphi$ is isomorphic except on finite number of subvarieties of codimension two at least. This conditions is equivalent to that there is no prime divisor pulled back to zero divisor by $f$ or $f^{-1}$. Hence, if $\varphi$ is a pseudo-automorphism, then $\varphi$ and $\varphi^{-1}$ are algebraically stable.  

\begin{proposition}[\cite{DO1988}]\label{NSauto}
Let $\cX$ and $\cY$ be smooth projective varieties and
$\varphi$ a pseudo-isomorphism from $\cX$ to $\cY$. Then
$\varphi$ acts on the N\'eron-Severi bi-lattice as an automorphism preserving the intersections.
\end{proposition}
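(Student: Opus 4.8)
The plan is to show that a pseudo-isomorphism $\varphi:\cX\to\cY$ induces a well-defined isomorphism on the N\'eron-Severi lattices in both directions, and that this isomorphism respects the intersection pairing. The key observation is that, by definition, $\varphi$ fails to be an isomorphism only along subvarieties of codimension at least two; in particular there is no prime divisor on $\cX$ contracted to something of codimension $\geq 2$ by $\varphi$, and likewise none contracted by $\varphi^{-1}$. By the remark immediately preceding the statement, this makes both $\varphi$ and $\varphi^{-1}$ algebraically stable, so Proposition~\ref{AS1} applies without the loss that spoils the general case in Example~\ref{EX1}.

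First I would set up the pullback and pushforward maps $\varphi^*:N^1(\cY)\to N^1(\cX)$ and $\varphi_*:N^1(\cX)\to N^1(\cY)$ using the graph decomposition $\varphi=\pi_{\cY}\circ\pi_{\cX}^{-1}$ introduced earlier, noting that for a birational map $\varphi_*=(\varphi^{-1})^*$. The crucial step is to verify that $\varphi^*$ and $(\varphi^{-1})^*$ are mutually inverse. Apply Proposition~\ref{AS1} to the pair $\varphi:\cX\to\cY$ and $\varphi^{-1}:\cY\to\cX$: since no prime divisor on $\cX$ has its image contained in $I(\varphi^{-1})$ (this is exactly the pseudo-isomorphism hypothesis, the same no-contracted-divisor condition), we get $\varphi^*\circ(\varphi^{-1})^*=(\varphi^{-1}\circ\varphi)^*=\mathrm{id}^*=\mathrm{id}$, and symmetrically $(\varphi^{-1})^*\circ\varphi^*=\mathrm{id}$. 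Hence $\varphi^*$ is a lattice isomorphism with inverse $(\varphi^{-1})^*=\varphi_*$.

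Next I would check preservation of the intersection pairing. For divisor classes $D_1,D_2$ on $\cY$, I want $(\varphi^*D_1)\cdot(\varphi^*D_2)=D_1\cdot D_2$. The natural argument is via the graph: pulling back to $G_f$ and pushing down, the projection formula gives $(\pi_{\cX})_*\,\pi_{\cX}^*(\alpha)\cdot\pi_{\cX}^*(\beta)=\alpha\cdot\beta$ because $\pi_{\cX}$ and $\pi_{\cY}$ are birational morphisms, and the exceptional loci contribute only in codimension $\geq 2$, which do not affect the intersection of divisor classes on an $N$-fold computed as a top self-intersection with the appropriate complementary cycle. Equivalently, one can use that $\varphi^*$ and $\varphi_*$ are inverse together with the adjunction $\varphi_* a\cdot b=a\cdot\varphi^* b$, which forces $\varphi^*a\cdot\varphi^*b=\varphi_*\varphi^*a\cdot b=a\cdot b$. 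The same reasoning extended to the full cohomology ring, or applied componentwise to the bilattice $N^1\oplus N_1$ via Poincar\'e duality, yields that $\varphi$ acts as an automorphism of the bilattice preserving intersections.

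I expect the main obstacle to be making the intersection-preservation step rigorous in dimension $N>2$, where the relevant statement is really about the action on $N^1\times N_1$ under the perfect duality pairing rather than a self-pairing on divisors alone. The subtlety is that on a higher-dimensional variety the bilinear form pairs divisor classes against curve classes (or more generally $N^1$ against $N_1$), and one must confirm that the exceptional loci of the two graph projections, being codimension $\geq 2$, genuinely drop out of this pairing; this is precisely where the pseudo-isomorphism hypothesis (as opposed to a general birational map) is indispensable, and where the contrast with Example~\ref{EX1} shows the hypothesis cannot be dropped.
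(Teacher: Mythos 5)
Your argument is correct and follows essentially the same route as the paper's (very terse) proof: the paper declares the isomorphism on $N^1$ ``obvious by definition of pseudo-isomorphisms'' and defines the action on $N_1$ via Poincar\'e duality so that the pairing is preserved by construction, which is exactly your ``equivalently'' argument via $\varphi_*\varphi^*=\mathrm{id}$ and adjunction. Your use of Proposition~\ref{AS1} to establish that $\varphi^*$ and $(\varphi^{-1})^*$ are mutually inverse just makes explicit the step the paper leaves implicit, and your closing caveat correctly identifies that the pairing to track in dimension $N>2$ is $N^1\times N_1$ rather than a self-pairing of divisors.
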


\begin{proof}
It is obvious that $\varphi_*:N^1(\cX)\mapsto N^1(\cY)$ is an isomorphism by definition of pseudo-isomorphisms. The action $\varphi_*:N_1(\cX)\mapsto N_1(\cY)$ is determined by this isomorphism and the Poincar\'e duality. 
\end{proof}

\medskip

\noindent {\it Blowup of a direct product of $\P^m$}\\
As we have seen in the example, it is convenient to write the generators of 
the N\'eron-Severi bilattice explicitly. Following \cite{TT2009}, we give some formulae 
for some rational varieties which appear as spaces of initial conditions of Painlev\'e systems.
Note that the N\'eron-Severi bilattice coincides with $H^2(\cX,\Z)\times H_2(\cX,\Z)$ if $\cX$ is a smooth projective rational variety, since $\pic^0(\cX)=\{0\}$ in this case.

Let $\cX$ be a rational variety obtained by $K$ successive blowups from $\P^{m_1}\times\cdots \times\P^{m_n}$ with $N=m_1+\cdots+m_n$, and $({\bf x}_1,\dots, {\bf x}_n)$ its coordinate chart with homogeneous coordinates ${\bf x}_i=(x_{i0}:x_{i1}:\cdots:x_{im_i})$. Let $H_i$ denote the total transform of the class of a hyper-plane ${\bf c}_i \cdot {\bf x}_i=0$, where ${\bf c}_i$ is a constant vector in $\P^{m_i}$, and $E_k$ the total transform of the $k$-the exceptional divisor class. Let $h_i$ denote the total transforms of the class of a line
$$\{{\bf x}~|~ {\bf x}_j={\bf c}_j (\forall j\neq i),\ {\bf x}_i=s{\bf a}_i+t{\bf b}_i (\exists (s:t)\in \P^1) \},$$ 
where ${\bf a}_i$,  ${\bf b}_i$
and ${\bf c}_j$'s are constant vectors in $\P^{m_i}$ and $\P^{m_j}$ respectively, and $e_k$ the class of a line in a fiber of the $k$-th blow-up. Note that the exceptional divisor for a blowing-up along a $d$-dimensional subvariety $V$ is isomorphic to $V\times \P^{N-d-1}$, where $\P^{N-d-1}$ is a fiber.   

Then the Picard group $\simeq H^2(\mathcal{X},\Z)$ and its Poincar\'e dual $\simeq H_2(\mathcal{X},\Z)$ are lattices
\begin{align}\label{NSbasis}
H^2(\mathcal{X},\Z)=\bigoplus_{i=1}^n \Z H_i \oplus \bigoplus_{k=1}^{K} \Z E_k,\quad 
H_2(\mathcal{X},\Z)=\bigoplus_{i=1}^n \Z h_i \oplus \bigoplus_{k=1}^{K} \Z e_k
\end{align}
and the intersection form is given by
\begin{align}
\langle H_i, h_j\rangle = \delta_{ij},\quad 
\langle E_l, e_l\rangle = -\delta_{kl},\quad
\langle H_i, e_k\rangle =0.
\end{align}

Let $\varphi$ be a pseudo-automorphism on $\cX$, and $A$ and $B$ be matrices representing
$\varphi_*: H^2(\cX,\Z)\to  H^2(\cY,\Z)$ and $\varphi_*: H_2(\cX,\Z)\to  H_2(\cY,\Z)$
respectively on basis \eqref{NSbasis}.  
Then, for any ${\bf f}\in H^2(\cX,\Z)$ and ${\bf g}\in  H_2(\cY,\Z)$ it holds that
\begin{align*}
 \langle {\bf f}, {\bf g}\rangle= {\bf f}^T J  {\bf g}, \quad
J=\begin{bmatrix}
I_n&0\\0& -I_K
\end{bmatrix},
\end{align*}
where ${*}^T$ denotes transpose and $I_m$ denotes the identity matrix of size $m$. 
Thus, $\langle A{\bf f}, B{\bf g}\rangle= \langle {\bf f}, {\bf g}\rangle$ yields $A^T J B = J$, and 
hence 
\begin{align}\label{H2H2}
B= J (A^{-1})^T J,
\end{align} 
which is a formula for computing the action on $H_2(\cX,\Z)$ from that on $H^2(\cX,\Z)$.

\begin{example}\label{SCT}
Let $\cX$ be obtained by blowing up $\P^3$ at four points $(1:0:0:0)$, $(0:1:0:0)$, $(0:0:1:0)$, $(0:0:0:1)$, 
and both $f:\cX\to \cX$ be the standard Cremona transformation of $\P^3$:
$(x_0:x_1:x_2:x_3)\to (x_0^{-1}:x_1^{-1}:x_2^{-1}:x_3^{-1})$. 
 Then $I(f)$ consists of the proper (strict) transform of 6 lines passing through two of the four points blown up.
 This is a simple example of a pseudo-automorphism  (see Fig.~\ref{fig.SCT}). 

The push-forward action on divisor classes is 
\begin{align*}
&f_*: H \mapsto 3H -2E_{0,1,2,3},\quad E_i\mapsto H-E_{i+1,i+2,i+3} \quad (i=0,1,2,3 \mod 4),
\end{align*}
where $E_{i_1,\dots,i_k}=E_{i_1}+\cdots+E_{i_k}$,
while its dual is
\begin{align*}
&f_*: h \mapsto 3h -e_{0,1,2,3},\quad e_i\mapsto 2h-e_{i+1,i+2,i+3} \quad (i=0,1,2,3 \mod 4).
\end{align*}
The corresponding representing matrices 
\begin{align*}
A=\begin{bmatrix}
3&1& 1& 1& 1\\-2& 0& -1& -1& -1\\-2& -1& 0& -1& -1\\
-2&-1&-1&0& -1\\-2& -1& -1& -1& 0
 \end{bmatrix},\quad
B=\begin{bmatrix}
3&2& 2& 2& 2\\-1& 0& -1& -1& -1\\-1& -1& 0& -1& -1\\
-1&-1&-1&0& -1\\-1& -1& -1& -1& 0
 \end{bmatrix}
\end{align*}
satisfies \eqref{H2H2}.
 It is also easy to check that $(f_*)^2$ is the identity as it should be.
\end{example}

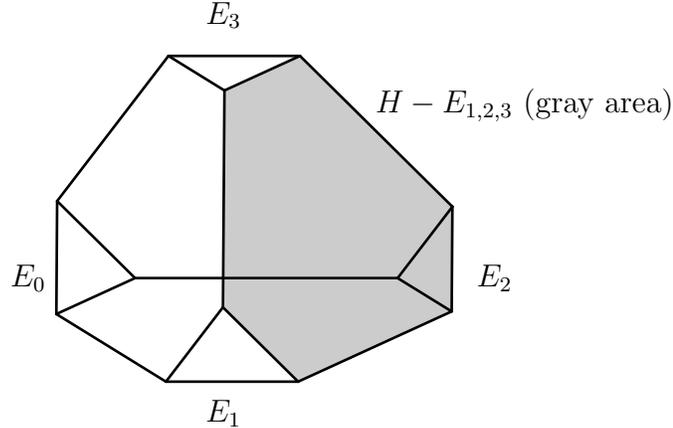
\begin{figure}[htbp]
\begin{center}
\begin{tikzpicture}[line cap=round,line join=round,>=triangle 45,x=1.0cm,y=1.0cm]
\clip(-4,-2.2) rectangle (7.5,4.0);
\fill[color=light-gray] (1.0100074663204026,2.496078883298166) -- (0.9900427197130277,-0.3888270014674707) -- (1.9892933311593026,-1.3780348932007174) -- (4.030806917016269,-0.4436107643507623) -- (4.040450629284764,0.9499056584467414) -- (2.014257101119948,2.9557354325596488) -- cycle;
\draw [line width=1.pt] (-0.17893199155981188,0.)-- (-1.2247453335497485,-0.4786807336471354);
\draw [line width=1.pt] (-1.2143392306443759,1.0250011361791915)-- (-1.2247453335497485,-0.4786807336471354);
\draw [line width=1.pt] (-1.2143392306443759,1.0250011361791915)-- (-0.17893199155981188,0.);
\draw [line width=1.pt] (0.23181738137942265,-1.3780348932007174)-- (0.9900427197130277,-0.3888270014674707);
\draw [line width=1.pt] (0.9900427197130277,-0.3888270014674707)-- (1.9892933311593026,-1.3780348932007174);
\draw [line width=1.pt] (1.9892933311593026,-1.3780348932007174)-- (0.23181738137942265,-1.3780348932007174);
\draw [line width=1.pt] (4.030806917016269,-0.4436107643507623)-- (3.312350353013404,0.);
\draw [line width=1.pt] (3.312350353013404,0.)-- (4.040450629284764,0.9499056584467414);
\draw [line width=1.pt] (4.040450629284764,0.9499056584467414)-- (4.030806917016269,-0.4436107643507623);
\draw [line width=1.pt] (0.26556370719039113,2.955735432559649)-- (1.0100074663204026,2.496078883298166);
\draw [line width=1.pt] (1.0100074663204026,2.496078883298166)-- (2.014257101119948,2.9557354325596488);
\draw [line width=1.pt] (2.014257101119948,2.9557354325596488)-- (0.26556370719039113,2.955735432559649);
\draw [line width=1.pt] (0.26556370719039113,2.955735432559649)-- (-1.2143392306443759,1.0250011361791915);
\draw [line width=1.pt] (-1.2247453335497485,-0.4786807336471354)-- (0.23181738137942265,-1.3780348932007174);
\draw [line width=1.pt] (1.9892933311593026,-1.3780348932007174)-- (4.030806917016269,-0.4436107643507623);
\draw [line width=1.pt] (4.040450629284764,0.9499056584467414)-- (2.014257101119948,2.9557354325596488);
\draw [line width=1.pt] (1.0100074663204026,2.496078883298166)-- (0.9900427197130277,-0.3888270014674707);
\draw [line width=1.pt] (-0.17893199155981188,0.)-- (3.312350353013404,0.);
\draw[color=black] (-1.6, 0) node {$E_0$};
\draw[color=black] (1, -1.8) node {$E_1$};
\draw[color=black] (4.6, 0) node {$E_2$};
\draw[color=black] (1, 3.5) node {$E_3$};
\draw[color=black] (5, 2.3) node {$H-E_{1,2,3}$ (gray area)};
\end{tikzpicture}
\caption{Example~\ref{SCT}: $E_0$ and $H-E_{1,2,3}$ are exchanged.}
    \label{fig.SCT}
\end{center}
\end{figure}

\medskip

\noindent {\it Degree of a mapping}\\
Let $\varphi$ be a rational mapping from $\C^N$ to itself: 
$$ \varphi: (\bar{x}_1,\dots,\bar{x}_N)= (\varphi_1(x_1,\cdots,x_N),\dots,\varphi_N(x_1,\cdots,x_N)).$$ 
The degree of $\bar{x_i}$ of $\varphi$ with respect $x_j$ is defined as
the degree of $\varphi_i$ as a rational function of $x_j$, i.e. the maximum of degrees of
numerator and denominator.    
Let $\cX$ be a rational variety obtained by $K$ successive blowups from $(\P^1)^N$.
Then the degree of $\bar{x_i}$ of $\varphi$ with respect $x_j$ is given by
the coefficient of $H_j$ in $\varphi^*(H_i)$. When $\varphi$ is iterated,   
the degree of $\bar{x_i}$ of $\varphi^n$ with respect $x_j$ is given by
the coefficient of $H_j$ in $(\varphi^n)^*(H_i)$, which coincides with $(\varphi^*)^n(H_i)$
if $\varphi$ is algebraically stable on $\cX$. (The reason is exactly the same with the two-dimensional case. See \cite{Takenawa2001} for details.)

There is another (and more standard) definition of the mapping degree. 
Let $\varphi$ be a rational mapping on $\C^N$ as above.
We can extend the action of $\varphi$ onto $\P^N$ 
by replacing $x_j$ by $x_j/x_0$, rewriting $\varphi_i$'s so that they have the common denominator
and considering them as $\bar{x}_i/\bar{x}_0$.
Then $\varphi$ can be expressed as
$$ \varphi: (\bar{x}_0: \dots:\bar{x}_N)= (p_0(x_0,\dots,x_N): \dots:p_N(x_0,\cdots,x_N)),$$
where $p_i$'s are homogeneous polynomials and the common factor is only a constant. 
Then, the degree of $\varphi$ is defined as the common degree of $p_i$'s.
Let $\cX$ be a rational variety obtained by $K$ successive blowups from $\P^N$.
Then the degree of $\varphi$ is given by the coefficient of $H$ in $\varphi^*(H)$. 
When $\varphi$ is iterated, the degree of $\varphi^n$ is given by
the coefficient of $H$ in $(\varphi^n)^*(H)$, which coincides with $(\varphi^*)^n(H)$
if $\varphi$ is algebraically stable on $\cX$.

Above two kinds of degrees are related to each other. Indeed, it is clearly holds that
\begin{align*} &\max_i \{ \mbox{ $\sum_j$ degree of $\varphi_i$ for $x_j$}\} \leq \mbox{degree of $\varphi$}\\
&\qquad \leq N \max_i \{ \mbox{ $\sum_j$ degree of $\varphi_i$ for $x_j$}\}.
\end{align*}

Of course we can also consider intermediate of the above degrees by extending the action of $\varphi$ onto $\P^{m_1}\times\cdots \times\P^{m_n}$ with $N=m_1+\cdots+m_n$.
But we do not use such degrees in this paper and omit them.

\section{Singularity confinement}

The idea of the singularity confinement test is as follows. Consider a hypersurface in some compactification $X$ of $\C^n$ which is contracted to a lower dimensional variety (singularity) by a birational automorhism $f$ of $X$. We say the singularity to be confined if there exists an integer $n\geq 2$ such that the hypersurface is recovered to some hypersurface by $f^n$ in generic. In this case the memory of initial 
conditions is said to be recover. Let us introduce the set of contracted hypersurfaces: 
$${\mathcal E}(f)=\{D \subset  X:\mbox{hypersurface}~|~{\rm det}({\partial f}/{\partial x})=0 \mbox{ on $
D$ in generic}\},$$
where zero of the Jacobian contraction to a lower dimensional variety. If singularity is confined for every $D$ in $\cE(f)$, we say that the initial data is not lost and the map $f$ satisfies the singularity confinement criterion. 
Note that the existence of confined singular sequence implies algebraical unstability.   

In this section we consider the mappings on compactified space $(\P^1)^4=(\C\P^1)^4$ and apply the singularity confinement test to them.\\

\noindent {\it Case $A_2^{(1)} + A_2^{(1)}$}:\\
If we take $q_1=\ve$ with $|\ve| \ll 1$ and the others are generic, the principal terms  of the Laurent series with respect to $\ve$ in the trajectories are
\begin{align*}
&(\ve, p_1^{(0)}, q_2^{(0)}, p_2^{(0)})\mbox{: 3 dim}\\
&\qquad \to   (q_1^{(1)}, p_1^{(1)}, a\ve^{-1}, \ve)\mbox{: 2 dim}\ \fbox{14}\\
&\qquad \to (-a\ve^{-1}, a\ve^{-1}, q_2^{(2)}, p_2^{(2)})\mbox{: 2 dim}\ \fbox{4}\\
&\qquad \to (q_1^{(3)}, p_1^{(3)},-\ve,-a\ve^{-1})\mbox{: 2 dim}\ \fbox{16}\\
&\qquad \to ( q_1^{(4)},-\ve, q_2^{(4)}, p_2^{(4)})\mbox{: 3 dim},
\end{align*}
where $x_i^{(j)}$ denotes a generic value in $\C$, 
``$k$ dim'' denotes the dimension of corresponding subvariety in $(\P^1)^4$ and 
$\fbox{$n$}$ denotes the order of blowing up that we explain in the next section.
Similarly, starting with $q_2=\ve$ and the others being generic, we get
\begin{align*}
&(q_1^{(0)}, p_1^{(0)}, \ve, p_2^{(0)})\mbox{: 3 dim}\\
&\qquad \to  (a\ve^{-1}, \ve, q_2^{(1)}, p_2^{(1)})\mbox{: 2 dim}\ \fbox{6}\\
&\qquad \to (q_1^{(2)}, p_1^{(2)}, -a\ve^{-1}, a\ve^{-1})\mbox{: 2 dim}\ \fbox{12} \\
&\qquad \to (-\ve, -a\ve^{-1}, q_2^{(3)}, p_2^{(3)})\mbox{: 2 dim}\ \fbox{8}\\
&\qquad \to (q_1^{(4)}, p_1^{(4)}, q_2^{(4)},-\ve)\mbox{: 3 dim}.
\end{align*}
In both two cases, information on the initial values $x_i^{(0)}$ is recovered after finite number of steps, and thus singularities are confined.
   
We also find another (cyclic) singularity pattern as
\begin{align*}
&(\ve^{-1}, p_1^{(0)}, q_2^{(0)}, p_2^{(0)})\mbox{: 3 dim}\\
&\qquad \to (q_1^{(1)}, p_1^{(1)},-\ve^{-1}, \ve^{-1})\mbox{: 2 dim}\ \fbox{10}\\
&\qquad \to (q_1^{(2)},-\ve^{-1}, q_2^{(2)}, p_2^{(2)})\mbox{: 3 dim}\\
&\qquad \to (q_1^{(3)}, p_1^{(3)}, \ve^{-1}, p_2^{(3)})\mbox{: 3 dim}\\
&\qquad \to (-\ve^{-1}, \ve^{-1}, q_2^{(4)}, p_2^{(4)})\mbox{: 2 dim}\ \fbox{2}\\
&\qquad \to (q_1^{(5)}, p_1^{(5)}, q_2^{(5)},-\ve^{-1})\mbox{: 3 dim}\\
&\qquad \to(\ve^{-1}, p_1^{(6)}, q_2^{(6)}, p_2^{(6)}),
\end{align*}
where the last hyper-surface is the same with the first one.

Moreover, since we need several times blowups for resolve each singularity, we should consider the following singularity sequences as well, where base varieties of those blow-ups appear
\begin{align*}
&(c_1^{(0)}\ve^{-1},  c_2^{(0)} \ve^{-1}, q_2^{(0)}, p_2^{(0)})\mbox{: 2 dim}\ \fbox{1}\\
&\qquad \to (q_1^{(1)}, p_1^{(1)}, c_1^{(1)}\ve^{-1},  c_2^{(1)}\ve^{-1})  \mbox{: 2 dim}\ \fbox{9}\\
&\qquad \to (c_1^{(2)}\ve^{-1},  c_2^{(2)} \ve^{-1}, q_2^{(2)}, p_2^{(2)})\\
&(c_1^{(0)}\ve^{-1},  c_2^{(0)} \ve, q_2^{(0)}, p_2^{(0)})\mbox{: 2 dim}\ \fbox{5}\\
&\qquad \to (q_1^{(1)}, p_1^{(1)}, c^{(1)}\ve^{-1}, c^{(1)}\ve^{-1})  \mbox{: 2 dim}\ \fbox{11} \\
&\qquad \to (c_1^{(2)}\ve,  c_2^{(2)} \ve^{-1},  q_2^{(2)}, p_2^{(2)})\mbox{: 2 dim}\ \fbox{7}\\
&\qquad \to (q_1^{(3)}, p_1^{(3)}, c_1^{(3)} \ve^{-1}, c_2^{(3)}\ve)\mbox{: 2 dim}\ \fbox{13}\\
&\qquad \to (c^{(4)}\ve^{-1}, c^{(4)}\ve^{-1}, q_2^{(4)}, p_2^{(4)})\mbox{: 2 dim}\ \fbox{3}\\
&\qquad \to  ( q_1^{(5)}, p_1^{(5)}, c_1^{(5)}\ve,  c_2^{(5)} \ve^{-1})\mbox{: 2 dim}
\ \fbox{15}\\
&\qquad \to (c_1^{(6)}\ve^{-1},  c_2^{(6)} \ve, q_2^{(6)}, p_2^{(6)}).
\end{align*}
where the last subvariety for each sequence is the same with the first one.

The inclusion relations of these bases of blow-ups are
\begin{align}
&{\color{black} \fbox{1}}\supset {\color{black} \fbox{2}} 
\supset {\color{black} \fbox{3}} 
\supset {\color{black} \fbox{4}},\quad 
 {\color{black} \fbox{5}} 
\supset {\color{black} \fbox{6}},\quad 
 {\color{black} \fbox{7}} 
\supset {\color{black} \fbox{8}}\nonumber\\
&{\color{black} \fbox{9}}\supset {\color{black} \fbox{10}} 
\supset {\color{black} \fbox{11}} 
\supset {\color{black} \fbox{12}},\quad 
 {\color{black} \fbox{13}} 
\supset {\color{black} \fbox{14}},\quad 
 {\color{black} \fbox{15}} 
\supset {\color{black} \fbox{16}}, \label{inclusion}
\end{align}
where we need to compare lower terms of the Laurent series to see these relations.\\

\noindent {\it Case $A_5^{(1)}$}:\\
We find following two singularity sequences:
\begin{align*}
&(\ve,p_1^{(0)},q_2^{(0)},p_2^{(0)})\mbox{{\color{black}: 3 dim}}\\
&\qquad \to (-p_2^{(0)}+a/q_2^{(0)}+b_1,q_2^{(0)}, a\ve^{-1},\ve)\mbox{{\color{black}: 2 dim}} \ {\color{black} \fbox{6}} \\
&\qquad \to  (p_2^{(0)}-a/q_2^{(0)},a\ve^{-1},-a\ve^{-1},-p_2^{(0)}+a/q_2^{(0)}+b_1)\mbox{{\color{black}: 1 dim}}  \ {\color{black} \fbox{4}}\\
&\qquad \to (-\ve, -a\ve^{-1}, q_2^{(3)},p_2^{(0)}-a/q_2^{(0)})\mbox{{\color{black}: 2 dim}}  \ {\color{black} \fbox{8}}\\
&\qquad \to  (q_1^{(4)},p_1^{(4)},q_2^{(4)},-\ve)\mbox{{\color{black}: 3 dim}},
\end{align*}
and
\begin{align*}
&(q_1^{(0)},p_1^{(0)},\ve^{-1},p_2^{(0)})\mbox{{\color{black}: 3 dim}}\\
&\qquad \to  (-p_2^{(0)}-q_1^{(0)}+b_1,\ve^{-1}, -\ve^{-1},q_1^{(0)})\mbox{{\color{black}: 2 dim}} \ {\color{black} \fbox{2}}\\
&\qquad \to  (p_2^{(0)}, -\ve^{-1},q_2^{(2)},-p_2^{(0)}-q_1^{(0)}+b_1)\mbox{{\color{black}: 3 dim}}\\
&\qquad \to  (q_1^{(3)},p_1^{(3)},\ve^{-1},p_2^{(3)}) \mbox{Returned}.
\end{align*}

We should consider the following singularity sequences as well, where base varieties of those blow-ups appear.
\begin{align*}
&(q_1^{(0)},c_1^{(0)}\ve^{-1},c_2^{(0)} \ve^{-1},p_2^{(0)})\mbox{{\color{black}: 2 dim}} \ {\color{black} \fbox{1}}\\
&\qquad \to (q_1^{(1)},c_1^{(1)} \ve^{-1},c_2^{(1)} \ve^{-1},p_2^{(1)})  \mbox{{\color{black}: Returned}}\\
&(q_1^{(0)},p_1^{(0)},c_1^{(0)} \ve^{-1},c_2^{(0)}\ve )\mbox{{\color{black}: 2 dim}} \ {\color{black} \fbox{5}}\\
&\qquad \to
(-q_1^{(0)}+b_1,c_1^{(0)} \ve^{-1},-c_1^{(0)} \ve^{-1},q_1^{(0)})\mbox{{\color{black}: 1 dim}}  \ {\color{black} \fbox{3}}\\
 &\qquad \to 
(c_2^{(2)}\ve,-c_1^{(0)} \ve^{-1},p_2^{(0)},-q_1^{(0)}+b_1)\mbox{{\color{black}: 2 dim}}
\ {\color{black} \fbox{7}}\\
 &\qquad \to 
(q_1^{(3)},p_1^{(3)},c_1^{(3)} \ve^{-1},c_2^{(3)}\ve )  \mbox{{\color{black}: Returned}}.
\end{align*}
Since the mapping is symmetric with respect to $(q_1,p_1)\leftrightarrow (q_2,p_2)$,
there are the counterparts of these sequences.
The inclusion relations of these bases of blow-ups are the same with \eqref{inclusion}.

\section{Space of initial conditions and linearisation on the N\'eron-Severi lattices}

In this section we construct a space of initial conditions by blowing up the defining variety along singularities of the previous section. 
Recall that as a complex manifold, in local coordinates $U\subset \C^N$, blowing up along a subvariety $V$ of dimension $N-k$, $k\geq 2$, written as 
$$x_1-h_1(x_{k+1},\dots x_N)=\dots=x_k-h_k(x_{k+1},\dots x_N)=0,$$
where $h_i$'s are holomorphic functions,  
is a birational morphism $\pi:X\to U$ such that $X=\{U_i\}$ is an open variety given by
\begin{align*}
U_i=\{(u_1^{(i)},\dots, u_k^{(i)},x_{k+1},\dots x_N )\in \C^N\}\quad (i=1,\dots,k)
\end{align*}
with $\pi: U_i \to U$: 
\begin{align*}(x_1,\dots,x_N)=&(u_1^{(i)}u_i^{(i)}+h_1, \dots, u_{i-1}^{(i)}u_i^{(i)}+h_{i-1},
u_i^{(i)}+h_i, \\
\quad &u_{i+1}^{(i)}u_i^{(i)}+h_{i+1}\dots, u_k^{(i)}u_i^{(i)}+h_k,x_{k+1},\dots,x_N).
\end{align*} 
It is convenient to write the coordinates of $U_i$ as
$$\left(\frac{x_1-h_1}{x_i-h_i},\dots,\frac{x_{i-1}-h_{i-1}}{x_i-h_i},x_i-h_i,
\frac{x_{i+1}-h_{i+1}}{x_i-h_i},\dots,\frac{x_k-h_k}{x_i-h_i},x_{k+1},\dots x_N \right).$$
The exceptional divisor $E$ is written as $u_i=0$ in $U_i$ and each point in the center of 
blowup corresponds to a subvariety isomorphic to $\P^{k-1}$:
$(x_1-h_1: \dots: x_{k-1}-h_k)$. Hence $E$ is locally a direct product $V \times \P^{k-1}$.
We called such $\P^{k-1}$ a fiber of the exceptional divisor. 
(In algebraic setting the affine charts often need to be embedded into higher dimensional space.)

\begin{theorem}\label{SIC}
Each mapping \eqref{Map1} or \eqref{Map2} can be lifted to a pseudo-automorphism on a rational projective variety $\mathcal{X}$ obtained by successive 
16 blow-ups from $(\P^1)^4$, where the center of each blow-up, $C_i$ ($i=1,\dots,16$), is two-dimensional  sub-variety.
\end{theorem}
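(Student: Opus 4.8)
The plan is to realize each mapping as a pseudo-automorphism by performing the sixteen blow-ups dictated by the singularity confinement analysis of Section~3, and then to verify algebraic stability via Proposition~\ref{AS2}. Concretely, I would start from $(\P^1)^4$ with the four affine coordinates $(q_1,q_2,p_1,p_2)$ and compactify each factor, so that the relevant divisors at infinity (e.g.\ $q_1=\infty$, $q_2=\infty$, $p_i=\infty$) are available as codimension-one loci. The centers $C_1,\dots,C_{16}$ are precisely the two-dimensional subvarieties boxed as $\fbox{1},\dots,\fbox{16}$ in the confinement computation; the leading Laurent data there identify each $C_i$ explicitly (for instance $\fbox{1}$ is the locus $q_1=q_2=\infty$ with $(p_1,p_2)$ free, parametrized by the ratio $c_1/c_2$, and $\fbox{14}$ is where the trajectory first drops to two dimensions). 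I would blow up in the order indicated by the labels, taking care that the inclusion relations \eqref{inclusion} force the centers to be nested, so that later centers must be taken as proper transforms inside the earlier exceptional divisors rather than in the original $(\P^1)^4$.

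The core of the argument is to check that, on the variety $\cX$ obtained after all sixteen blow-ups, the lifted map $\tilde\varphi$ has no contracted divisor that is eventually mapped into an indeterminacy locus. First I would compute, in each affine chart of the blow-up construction described just before the theorem, the local expression of $\tilde\varphi$ and of $\tilde\varphi^{-1}$. The key point is that each divisor which was contracted by $\varphi$ on $(\P^1)^4$ becomes, after the blow-ups, mapped isomorphically onto an exceptional divisor, and conversely every exceptional divisor is the strict transform of a divisor blown down by $\varphi^{-1}$; the matching is read off from the singularity sequences, where the boxed centers appearing in one sequence record exactly how a contracted hypersurface is recovered after finitely many steps. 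Once I confirm that $\tilde\varphi$ maps the complement of a codimension-two set isomorphically onto its image, and similarly for $\tilde\varphi^{-1}$, the map is a pseudo-isomorphism by the definition given before Proposition~\ref{NSauto}, and hence both $\tilde\varphi$ and $\tilde\varphi^{-1}$ are algebraically stable.

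To organize this verification I would record the induced action $\tilde\varphi_*$ on the N\'eron-Severi lattice $H^2(\cX,\Z)=\bigoplus_{i}\Z H_i\oplus\bigoplus_{k}\Z E_k$ using the basis \eqref{NSbasis}. Each $H_i$ is pulled back to a combination of the $H_j$ and the $E_k$ determined by which exceptional divisors lie over the base locus of the corresponding projection, and each $E_k$ is sent to another lattice element according to the confinement pattern; the fact that the resulting matrix $A$ is invertible over $\Z$ and preserves the intersection form $J$ (equivalently, that \eqref{H2H2} produces an integral $B$) is the lattice-theoretic shadow of the pseudo-automorphism property and serves as a strong consistency check. I would then confirm that $(\tilde\varphi^*)^n(H_i)=(\tilde\varphi^n)^*(H_i)$ for all $n$, which by Proposition~\ref{AS2} is equivalent to algebraic stability and simultaneously yields the quadratic degree growth asserted in the introduction.

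The main obstacle I anticipate is bookkeeping for the nested centers: because of the inclusions \eqref{inclusion}, several of the $C_i$ meet the exceptional divisors of earlier blow-ups, so one must pass to proper transforms and choose the correct affine chart in which the next center is again smooth and of codimension two. Getting the leading Laurent coefficients of the confinement sequences precise enough to pin down these proper transforms — in particular identifying the finitely many lower-order terms that distinguish, say, $\fbox{2}$ from $\fbox{1}$ — is where the real work lies, and an error there would either leave a genuinely contracted divisor (destroying the pseudo-automorphism property) or produce a center that is singular or of the wrong dimension. The rest of the proof, once the centers and the local expressions are correct, reduces to the systematic chart-by-chart computation and the lattice verification outlined above.
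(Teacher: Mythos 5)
Your proposal takes essentially the same route as the paper: the sixteen two-dimensional centers are read off from the singularity confinement sequences (with the nested centers of \eqref{inclusion} handled as proper transforms inside earlier exceptional divisors), and the pseudo-automorphism property is verified chart by chart by checking that no divisor is contracted by the lifted map or its inverse, exactly as in the paper's sample computation sending $E_4$ to $E_8$ in the $U_4$--$U_8$ charts, with the N\'eron--Severi action serving as bookkeeping. Two minor points: the first center is $q_1=p_1=\infty$ (resp.\ $q_2=p_1=\infty$ in the $A_5^{(1)}$ case), not $q_1=q_2=\infty$; and the final check of $(\tilde\varphi^*)^n=(\tilde\varphi^n)^*$ is redundant, since algebraic stability of $\tilde\varphi$ and $\tilde\varphi^{-1}$ follows automatically once the pseudo-automorphism property is established.
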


Center $C_i$'s are given by the following data, where we only write one of the affine coordinate of the center variety or the exceptional divisor. The other coordinates can be obtained automatically (see Fig.~\ref{figure_blowup} and Fig.~\ref{figure_blowup2}). \\

\noindent {\it Case $A_2^{(1)}+A_2^{(1)}$}:\\
The center $C_i$ of $i$-the blowing up and one of the new coordinate systems $U_i$ obtained by the blowing-up are
\begin{align*}
\begin{array}{ll}
C_1: q_1^{-1}=p_1^{-1}=0  &U_1: (u_1,v_1, q_2, p_2)=(q_1^{-1}, q_1p_1^{-1}, q_2, p_2)\\
C_2: u_1=v_1+1=0&U_2: (u_2,v_2, q_2, p_2)=(u_1,u_1^{-1}(v_1+1), q_2, p_2)\\
C_3: u_2=v_2+b^{(1)}=0&U_3: (u_3,v_3, q_2, p_2)=(u_2,u_2^{-1} (v_2+b^{(1)}), q_2, p_2)\\
C_4: u_3=v_3+(b^{(1)})^2+a_0^{(1)}=0\hspace{-1cm}& \\
&\hspace{-2cm} U_4: (u_4,v_4, q_2, p_2)=(u_3,u_3^{-1} (v_3+(b^{(1)})^2+a_0^{(1)}), q_2, p_2)\\
C_{5}: q_1^{-1}=p_1=0  &U_{5}:  (u_{5}, v_5, q_2 , p_2)=(q_1^{-1}, q_1p_1, q_2, p_2)\\
C_{6}: u_{5}=v_{5}-a_1^{(1)}=0  &U_{6}:  (u_{6},v_6, q_2, p_2)=(u_{5},u_{5}^{-1}(v_{5}-a_1^{(1)}), q_2, p_2)\\
C_{7}: q_1=p_1^{-1}=0  &U_{7} (v_{7}, u_7, q_2 , p_2)=(q_1p_1, p_1^{-1}, q_2, p_2)\\
C_{8}: u_{7}=v_{7}+a_2^{(1)}=0  &U_{8}:  (v_{8},u_8, q_2, p_2)=(u_{7}^{-1}(u_{7}+a_2^{(1)}),u_{7}, q_2, p_2)
\\
C_9: p_2^{-1}=q_2^{-1}=0  &U_9:  (q_1, p_1,u_9,v_9)=(q_1, p_1 , q_2^{-1}, p_2^{-1}q_2)\\
C_{10}: u_9=v_9+1=0&U_{10}: (q_1, p_1,u_{10},v_{10})=(q_1, p_1, u_9,u_9^{-1}(v_9+1))\\
C_{11}: u_{10}=v_{10}+b^{(2)}=0&U_{11}: (q_1, p_1,u_{11},v_{11})=(q_1, p_1, u_{10},u_{10}^{-1}(v_{10}+b^{(2)}) )\\
C_{12}: u_{11}=v_{11}+(b^{(2)})^2+a_0^{(2)}=0\hspace{-1cm}&\\
&\hspace{-2cm}U_{12}: (q_1, p_1,u_{12},v_{12})=(q_1, p_1, u_{11},u_{11}^{-1}(v_{11}+(b^{(2)})^2+a_0^{(2)}))\\
C_{13}: p_2=q_2^{-1}=0  &U_{13}:  (q_1, p_1,u_{13},v_{13})=(q_1, p_1, q_2^{-1}, p_2q_2)\\
C_{14}: u_{13}=v_{13}-a_1^{(2)}=0  &U_{14}:  (q_1, p_1,u_{14},v_{14})=(q_1, p_1, u_{13},u_{13}^{-1}(v_{13}-a_1^{(2)}))\\
C_{15}: p_2^{-1}=q_2=0  &U_{15}:  (q_1, p_1, v_{15},u_{15})=(q_1, p_1, p_2q_2, p_2^{-1})\\
C_{16}: u_{15}=v_{15}+a_2^{(2)}=0  &U_{16}:  (q_1, p_1, v_{16}, u_{16})=(q_1, p_1, u_{15}^{-1}(v_{15}+a_2^{(2)}), u_{15})
\end{array}
\end{align*}
with $a_0^{(j)}=0$, $a_1^{(j)}=-a_2^{(j)}=a$ and $b^{(j)}=b$ for $j=1,2$,
where parameters $a={a_i^{(j)}, b^{(j)}}$ are introduced for ``deautonomisation'' as explained in the next section.\\

\noindent {\it Case $A_5^{(1)}$}:
\begin{align*}
\begin{array}{ll}
C_1: q_2^{-1}=p_1^{-1}=0  &U_1: (q_1,v_1,u_1,p_2)=(q_1,q_2p_1^{-1},q_2^{-1},p_2)\\
C_2: u_1=v_1+1=0&U_2: (q_1,v_2,u_2,p_2)=(q_1,u_1^{-1}(v_1+1),u_1,p_2)\\
C_3: u_2=q_1+p_2-b_1=0&U_3: (q_1,v_2,u_3,v_3)=(q_1,v_2,u_2,u_2^{-1}(q_1+p_2-b_1)\\
C_4: u_3=v_3+a_0=0&U_4: (q_1,v_2,u_4,v_4)=(q_1,v_2,u_3, u_3^{-1}(v_3+a_0))\\
C_5: q_2^{-1}=p_2=0  &U_5:  (q_1,p_1,u_5,v_5)=(q_1,p_1,q_2^{-1}, p_2q_2)\\
C_6: u_5=v_5+a_2=0  &U_6:  (q_1,p_1,u_5,v_5)=(q_1,p_1,u_5,u_5^{-1}(v_5+a_2))\\
C_7: q_1=p_1^{-1}=0  &U_7:  (v_7,u_7,q_2,p_2)=(q_1p_1,p_1^{-1},q_2, p_2)\\
C_8: u_7=v_7-a_4=0  &U_8:  (v_8,u_8,q_2,p_2)=(u_7^{-1}(v_7-a_4),u_7,q_2,p_2)
\\
C_9: q_1^{-1}=p_2^{-1}=0  &U_9:  (u_9,p_1,q_2,v_9)=(q_1^{-1},p_1,q_2,q_1p_2^{-1})\\
C_{10}: u_9=v_9+1=0&U_{10}: (u_{10},p_1,q_2,v_{10})=(u_9,p_1,q_2,u_9^{-1}(v_9+1))\\
C_{11}: u_{10}=q_2+p_1-b_2=0&U_{11}: (u_{11},v_{11},q_2,v_{10})=(u_{10},u_{10}^{-1}(q_2+p_1-b_2),q_2,v_{10})\\
C_{12}: u_{11}=v_{11}+a_3=0&U_{12}: (u_{12},v_{12},q_2,v_{10})=(u_{11},u_{11}^{-1}(v_{11}+a_3), q_2,v_{10})\\
C_{13}: q_1^{-1}=p_1=0  &U_{13}:  (u_{13},v_{13},q_2,p_2)=(q_1^{-1},q_1p_1,q_2, p_2)\\
C_{14}: u_{13}=v_{13}+a_5=0  &U_{14}:  (u_{14},v_{14},q_2,p_2)=(u_{13},p_2,q_2,u_{13}^{-1}(v_{13}+a_5))\\
C_{15}: p_2^{-1}=q_2=0  &U_{15}: (q_1,p_1,v_{15},u_{15})=(q_1,p_1,p_2q_2,p_2^{-1})\\
C_{16}: u_{15}=v_{15}-a_1=0  &U_{16}:  (q_1,p_1,v_{16},u_{16})=(q_1,p_1,u_{15}^{-1}(v_{15}-a_1),u_{15})
\end{array}
\end{align*}
with 
$$a_0=a_3=0,\quad a_1=a_4=a,\quad a_2=a_5=-a.$$

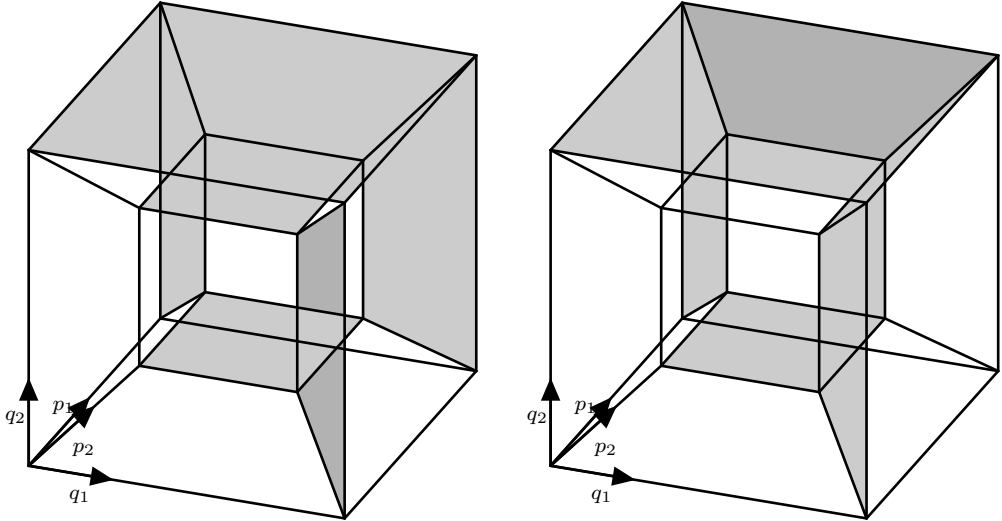
\begin{figure}[htbp]
\begin{center}
\begin{tikzpicture}[line cap=round,line join=round,>=triangle 45,x=0.7cm,y=0.7cm]
\clip(-0.6,-4) rectangle (9.12,7.44);
\fill[color=light-gray] (8.5,5.3) -- (6.35,3.3) -- (6.35,0.3) -- (8.5,-0.7)   -- cycle;
\fill[color=light-gray2]  (5.1,1.9) -- (6.,2.5) -- (6.,-3.5) -- (5.1,-1.1)   -- cycle;
\fill[color=light-gray]  (3.35,3.8) -- (3.35,0.8) -- (2.5,0.3) -- (2.5,6.3)   -- cycle;
\fill[color=light-gray] (2.5,6.3) -- (0.,3.5) -- (6.,2.5) -- (8.5,5.3)    -- cycle;
\fill[color=light-gray]  (3.35,3.8) -- (2.1,2.4) -- (5.1,1.9) -- (6.35,3.3) -- cycle;
\fill[color=light-gray] (3.35,0.8) -- (2.1,-0.6) -- (5.1,-1.1) -- (6.35,0.3)  -- cycle;
\draw [line width=1.pt] (0.,3.5)-- (0.,-2.5);
\draw [line width=1.pt] (0.,-2.5)-- (6.,-3.5);
\draw [line width=1.pt] (0.,-2.5)-- (2.5,0.3);
\draw [line width=1.pt] (6.,-3.5)-- (8.5,-0.7);
\draw [line width=1.pt] (8.5,-0.7)-- (2.5,0.3);
\draw [line width=1.pt] (2.5,0.3)-- (2.5,6.3);
\draw [line width=1.pt] (2.5,6.3)-- (0.,3.5);
\draw [line width=1.pt] (2.5,6.3)-- (8.5,5.3);
\draw [line width=1.pt] (8.5,5.3)-- (8.5,-0.7);
\draw [line width=1.pt] (8.5,5.3)-- (6.,2.5);
\draw [line width=1.pt] (6.,2.5)-- (0.,3.5);
\draw [line width=1.pt] (6.,2.5)-- (6.,-3.5);
\draw [line width=1.pt] (2.1,2.4)-- (2.1,-0.6);
\draw [line width=1.pt] (2.1,-0.6)-- (5.1,-1.1);
\draw [line width=1.pt] (2.1,-0.6)-- (3.35,0.8);
\draw [line width=1.pt] (5.1,-1.1)-- (6.35,0.3);
\draw [line width=1.pt] (6.35,0.3)-- (3.35,0.8);
\draw [line width=1.pt] (3.35,0.8)-- (3.35,3.8);
\draw [line width=1.pt] (3.35,3.8)-- (2.1,2.4);
\draw [line width=1.pt] (3.35,3.8)-- (6.35,3.3);
\draw [line width=1.pt] (6.35,3.3)-- (6.35,0.3);
\draw [line width=1.pt] (6.35,3.3)-- (5.1,1.9);
\draw [line width=1.pt] (5.1,1.9)-- (2.1,2.4);
\draw [line width=1.pt] (5.1,1.9)-- (5.1,-1.1);
\draw [line width=1.pt] (2.1,-0.6)-- (0.,-2.5);
\draw [line width=1.pt] (5.1,-1.1)-- (6.,-3.5);
\draw [line width=1.pt] (6.35,0.3)-- (8.5,-0.7);
\draw [line width=1.pt] (3.35,0.8)-- (2.5,0.3);
\draw [line width=1.pt] (2.1,2.4)-- (0.,3.5);
\draw [line width=1.pt] (5.1,1.9)-- (6.,2.5);
\draw [line width=1.pt] (6.35,3.3)-- (8.5,5.3);
\draw [line width=1.pt] (3.35,3.8)-- (2.5,6.3);
\draw [->,line width=1.pt] (0.,-2.5) -- (1.611891891891892,-2.768648648648649);
\draw [->,line width=1.pt] (0.,-2.5) -- (0.,-0.8274616682650864);
\draw [->,line width=1.pt] (0.,-2.5) -- (1.1648126942581385,-1.1954097824308851);
\draw [->,line width=1.pt] (0.,-2.5) -- (1.2588972156884197,-1.360997757234287);
\begin{scriptsize}
\draw[color=black] (0.97,-3.08) node {$q_1$};
\draw[color=black] (-0.25,-1.58) node {$q_2$};
\draw[color=black] (0.67,-1.4) node {$p_1$};
\draw[color=black] (1.05,-2.2) node {$p_2$};
\end{scriptsize}
\end{tikzpicture}
\begin{tikzpicture}[line cap=round,line join=round,>=triangle 45,x=0.7cm,y=0.7cm]
\clip(-0.6,-4) rectangle (9.12,7.44);
\fill[color=light-gray](0.,3.5) -- (2.5,6.3) -- (8.5,5.3) -- (6.,2.5)   -- cycle;
\fill[color=light-gray2] (2.5,6.3) -- (3.35,3.8) -- (6.35,3.3) -- (8.5,5.3)  -- cycle;
\fill[color=light-gray] (2.5,6.3) -- (3.35,3.8) -- (3.35,0.8) -- (2.5,0.3)   -- cycle;
\fill[color=light-gray]  (6.,2.5) -- (5.1,1.9) -- (5.1,-1.1) -- (6.,-3.5)   -- cycle;
\fill[color=light-gray]  (6.35,3.3) -- (5.1,1.9) -- (5.1,-1.1) -- (6.35,0.3) -- cycle;
\fill[color=light-gray]  (3.35,0.8) -- (2.1,-0.6) -- (5.1,-1.1) -- (6.35,0.3)  -- cycle;
\draw [line width=1.pt] (0.,3.5)-- (0.,-2.5);
\draw [line width=1.pt] (0.,-2.5)-- (6.,-3.5);
\draw [line width=1.pt] (0.,-2.5)-- (2.5,0.3);
\draw [line width=1.pt] (6.,-3.5)-- (8.5,-0.7);
\draw [line width=1.pt] (8.5,-0.7)-- (2.5,0.3);
\draw [line width=1.pt] (2.5,0.3)-- (2.5,6.3);
\draw [line width=1.pt] (2.5,6.3)-- (0.,3.5);
\draw [line width=1.pt] (2.5,6.3)-- (8.5,5.3);
\draw [line width=1.pt] (8.5,5.3)-- (8.5,-0.7);
\draw [line width=1.pt] (8.5,5.3)-- (6.,2.5);
\draw [line width=1.pt] (6.,2.5)-- (0.,3.5);
\draw [line width=1.pt] (6.,2.5)-- (6.,-3.5);
\draw [line width=1.pt] (2.1,2.4)-- (2.1,-0.6);
\draw [line width=1.pt] (2.1,-0.6)-- (5.1,-1.1);
\draw [line width=1.pt] (2.1,-0.6)-- (3.35,0.8);
\draw [line width=1.pt] (5.1,-1.1)-- (6.35,0.3);
\draw [line width=1.pt] (6.35,0.3)-- (3.35,0.8);
\draw [line width=1.pt] (3.35,0.8)-- (3.35,3.8);
\draw [line width=1.pt] (3.35,3.8)-- (2.1,2.4);
\draw [line width=1.pt] (3.35,3.8)-- (6.35,3.3);
\draw [line width=1.pt] (6.35,3.3)-- (6.35,0.3);
\draw [line width=1.pt] (6.35,3.3)-- (5.1,1.9);
\draw [line width=1.pt] (5.1,1.9)-- (2.1,2.4);
\draw [line width=1.pt] (5.1,1.9)-- (5.1,-1.1);
\draw [line width=1.pt] (2.1,-0.6)-- (0.,-2.5);
\draw [line width=1.pt] (5.1,-1.1)-- (6.,-3.5);
\draw [line width=1.pt] (6.35,0.3)-- (8.5,-0.7);
\draw [line width=1.pt] (3.35,0.8)-- (2.5,0.3);
\draw [line width=1.pt] (2.1,2.4)-- (0.,3.5);
\draw [line width=1.pt] (5.1,1.9)-- (6.,2.5);
\draw [line width=1.pt] (6.35,3.3)-- (8.5,5.3);
\draw [line width=1.pt] (3.35,3.8)-- (2.5,6.3);
\draw [->,line width=1.pt] (0.,-2.5) -- (1.611891891891892,-2.768648648648649);
\draw [->,line width=1.pt] (0.,-2.5) -- (0.,-0.8274616682650864);
\draw [->,line width=1.pt] (0.,-2.5) -- (1.1648126942581385,-1.1954097824308851);
\draw [->,line width=1.pt] (0.,-2.5) -- (1.2588972156884197,-1.360997757234287);
\begin{scriptsize}
\draw[color=black] (0.97,-3.08) node {$q_1$};
\draw[color=black] (-0.25,-1.58) node {$q_2$};
\draw[color=black] (0.67,-1.4) node {$p_1$};
\draw[color=black] (1.05,-2.2) node {$p_2$};
\end{scriptsize}
\end{tikzpicture}
\caption{left: Case $A_2^{(1)}+A_2^{(1)}$, right: Case $A_5^{(1)}$, 
gray parallelograms: the centers $C_1$, $C_5$, $C_7$, $C_9$, $C_{13}$, $C_{15}$ for both cases}
    \label{figure_blowup}
\end{center}
\end{figure}

\begin{figure}[htbp]
\begin{center}
\begin{tikzpicture}[line cap=round,line join=round,>=triangle 45,x=0.6cm,y=0.6cm]
\clip(-3.5,-10.5) rectangle (18.5,5.5);
\fill[color=light-gray] (-1.0,3.0) -- (-1.0,-0.5) -- (1.5,1.5) -- (1.5,5.0) -- cycle;
\draw [line width=1.pt] (-1.0,-0.5)-- (-1.0,3.0);
\draw [line width=1.pt] (-1.0,-0.5)-- (3.5,-0.5);
\draw [line width=1.pt] (-1.0,3.0)-- (3.5,3.0);
\draw [line width=1.pt] (3.5,3.0)-- (3.5,-0.5);
\draw [line width=1.pt] (-1.0,-0.5)-- (1.5,1.5);
\draw [line width=1.pt] (-1.0,-0.5)-- (-3.0,1.5);
\draw [line width=1.pt] (-1.0,3.0)-- (1.5,5.0);
\draw [line width=1.pt] (3.5,-0.5)-- (6.0,1.5);
\draw [line width=1.pt] (6.0,1.5)-- (1.5,1.5);
\draw [line width=1.pt] (1.5,1.5)-- (1.5,5.0);
\draw [line width=1.pt] (1.5,5.0)-- (6.0,5.0);
\draw [line width=1.pt] (6.0,5.0)-- (6.0,1.5);
\draw [line width=1.pt] (3.5,3.0)-- (6.0,5.0);
\draw [->,line width=1.pt] (-1.0,-0.5) -- (-2.0,0.5);
\draw [->,line width=1.pt] (-1.0,-0.5) -- (0.75,-0.5);
\draw [->,line width=1.pt] (-1.0,-0.5) -- (0.0,-0.5+0.8);
\draw [->,line width=1.pt] (-1.0,-0.5) -- (-1.0,1.1);
\begin{scriptsize}
\draw[color=black] (-2.0,-0.5) node {$q_2^{-1}$};
\draw[color=black] (0.6,-1.1) node {$p_1^{-1}$};
\draw[color=black] (0.37,0.0) node {$q_1$};
\draw[color=black] (-1.4,1.0) node {$p_2$};
\end{scriptsize}
%
\fill[color=light-gray] (1.0+12,3.0) -- (1.0+12,-0.5) -- (3.5+12,1.5) -- (3.5+12,5.0) -- cycle;
\draw [line width=1.pt] (-1.0+12,-0.5)-- (-1.0+12,3.0);
\draw [line width=1.pt] (-1.0+12,-0.5)-- (3.5+12,-0.5);
\draw [line width=1.pt] (-1.0+12,3.0)-- (3.5+12,3.0);
\draw [line width=1.pt] (3.5+12,3.0)-- (3.5+12,-0.5);
\draw [line width=1.pt] (-1.0+12,-0.5)-- (1.5+12,1.5);
\draw [line width=1.pt] (-1.0+12,-0.5)-- (-3.0+12,1.5);
\draw [line width=1.pt] (-1.0+12,3.0)-- (1.5+12,5.0);
\draw [line width=1.pt] (3.5+12,-0.5)-- (6.0+12,1.5);
\draw [line width=1.pt] (6.0+12,1.5)-- (1.5+12,1.5);
\draw [line width=1.pt] (1.5+12,1.5)-- (1.5+12,5.0);
\draw [line width=1.pt] (1.5+12,5.0)-- (6.0+12,5.0);
\draw [line width=1.pt] (6.0+12,5.0)-- (6.0+12,1.5);
\draw [line width=1.pt] (3.5+12,3.0)-- (6.0+12,5.0);
\draw [->,line width=1.pt] (-1.0+12,-0.5) -- (-2.0+12,0.5);
\draw [->,line width=1.pt] (-1.0+12,-0.5) -- (0.75+12,-0.5);
\draw [->,line width=1.pt] (-1.0+12,-0.5) -- (0.0+12,-0.5+0.8);
\draw [->,line width=1.pt] (-1.0+12,-0.5) -- (-1.0+12,1.1);
\draw [->,line width=2.pt] (-4.0+12,1.5) -- (-5+12,1.5);
\begin{scriptsize}
\draw[color=black] (-2.0+12,-0.5) node {$u_1$};
\draw[color=black] (0.6+12,-1.1) node {$v_1$};
\draw[color=black] (0.37+12,0.0) node {$q_1$};
\draw[color=black] (-1.4+12,1.0) node {$p_2$};
\draw[color=black] (-4.0+12,0.5) node {blowdown};
\end{scriptsize}
\draw[color=black] (5.5+12,-0.5) node {{\large $E_1$}};
\fill[color=light-gray] (1.5,5.0-8) -- (-1.0,0.5-8) -- (3.5,0.5-8) -- (6.0,5.0-8) -- cycle;
\draw [line width=1.pt] (-1.0,-0.5-8)-- (-1.0,3.0-8);
\draw [line width=1.pt] (-1.0,-0.5-8)-- (3.5,-0.5-8);
\draw [line width=1.pt] (-1.0,3.0-8)-- (3.5,3.0-8);
\draw [line width=1.pt] (3.5,3.0-8)-- (3.5,-0.5-8);
\draw [line width=1.pt] (-1.0,-0.5-8)-- (1.5,1.5-8);
\draw [line width=1.pt] (-1.0,-0.5-8)-- (-3.0,1.5-8);
\draw [line width=1.pt] (-1.0,3.0-8)-- (1.5,5.0-8);
\draw [line width=1.pt] (3.5,-0.5-8)-- (6.0,1.5-8);
\draw [line width=1.pt] (6.0,1.5-8)-- (1.5,1.5-8);
\draw [line width=1.pt] (1.5,1.5-8)-- (1.5,5.0-8);
\draw [line width=1.pt] (1.5,5.0-8)-- (6.0,5.0-8);
\draw [line width=1.pt] (6.0,5.0-8)-- (6.0,1.5-8);
\draw [line width=1.pt] (3.5,3.0-8)-- (6.0,5.0-8);
\draw [->,line width=1.pt] (-1.0,-0.5-8) -- (-2.0,0.5-8);
\draw [->,line width=1.pt] (-1.0,-0.5-8) -- (0.75,-0.5-8);
\draw [->,line width=1.pt] (-1.0,-0.5-8) -- (0.0,-0.5+0.8-8);
\draw [->,line width=1.pt] (-1.0,-0.5-8) -- (-1.0,1.1-8);
\draw [->,line width=2.pt] (7.0,-2.5) -- (8.0,-1.7);
\begin{scriptsize}
\draw[color=black] (-2.0,-0.5-8) node {$u_2$};
\draw[color=black] (0.6,-1.1-8) node {$v_2$};
\draw[color=black] (0.37,0.0-8) node {$q_1$};
\draw[color=black] (-1.4,1.0-8) node {$p_2$};
\draw[color=black] (9.0, -2.2) node {blowdown};
\end{scriptsize}
\draw[color=black] (5.5,-0.5-8) node {{\large $E_2$}};
%
\fill[color=light-gray] (1.5+12,3.3-8) -- (-1.0+12,1.3-8) -- (3.5+12,1.3-8) -- (6.0+12,3.3-8) -- cycle;
\draw [line width=1.pt] (-1.0+12,-0.5-8)-- (-1.0+12,3.0-8);
\draw [line width=1.pt] (-1.0+12,-0.5-8)-- (3.5+12,-0.5-8);
\draw [line width=1.pt] (-1.0+12,3.0-8)-- (3.5+12,3.0-8);
\draw [line width=1.pt] (3.5+12,3.0-8)-- (3.5+12,-0.5-8);
\draw [line width=1.pt] (-1.0+12,-0.5-8)-- (1.5+12,1.5-8);
\draw [line width=1.pt] (-1.0+12,-0.5-8)-- (-3.0+12,1.5-8);
\draw [line width=1.pt] (-1.0+12,3.0-8)-- (1.5+12,5.0-8);
\draw [line width=1.pt] (3.5+12,-0.5-8)-- (6.0+12,1.5-8);
\draw [line width=1.pt] (6.0+12,1.5-8)-- (1.5+12,1.5-8);
\draw [line width=1.pt] (1.5+12,1.5-8)-- (1.5+12,5.0-8);
\draw [line width=1.pt] (1.5+12,5.0-8)-- (6.0+12,5.0-8);
\draw [line width=1.pt] (6.0+12,5.0-8)-- (6.0+12,1.5-8);
\draw [line width=1.pt] (3.5+12,3.0-8)-- (6.0+12,5.0-8);
\draw [->,line width=1.pt] (-1.0+12,-0.5-8) -- (-2.0+12,0.5-8);
\draw [->,line width=1.pt] (-1.0+12,-0.5-8) -- (0.75+12,-0.5-8);
\draw [->,line width=1.pt] (-1.0+12,-0.5-8) -- (0.0+12,-0.5+0.8-8);
\draw [->,line width=1.pt] (-1.0+12,-0.5-8) -- (-1.0+12,1.1-8);
\draw [->,line width=2.pt] (-4+12,1.5-8) -- (-5+12,1.5-8);
\begin{scriptsize}
\draw[color=black] (-2.0+12,-0.5-8) node {$u_3$};
\draw[color=black] (0.6+12,-1.1-8) node {$v_2$};
\draw[color=black] (0.37+12,0.0-8) node {$q_1$};
\draw[color=black] (-1.4+12,1.0-8) node {$v_3$};
\draw[color=black] (-4+12,0.5-8) node {blowdown};
\end{scriptsize}
\draw[color=black] (5.5+12,-0.5-8) node {{\large $E_3$}};
\end{tikzpicture}
\caption{Case $A_5^{(1)}$, gray parallelograms: the centers $C_1$, $C_2$, $C_3$, $C_4$, rectangulars: the exceptional divisors $E_1$, $E_2$, $E_3$}
    \label{figure_blowup2}
\end{center}
\end{figure}
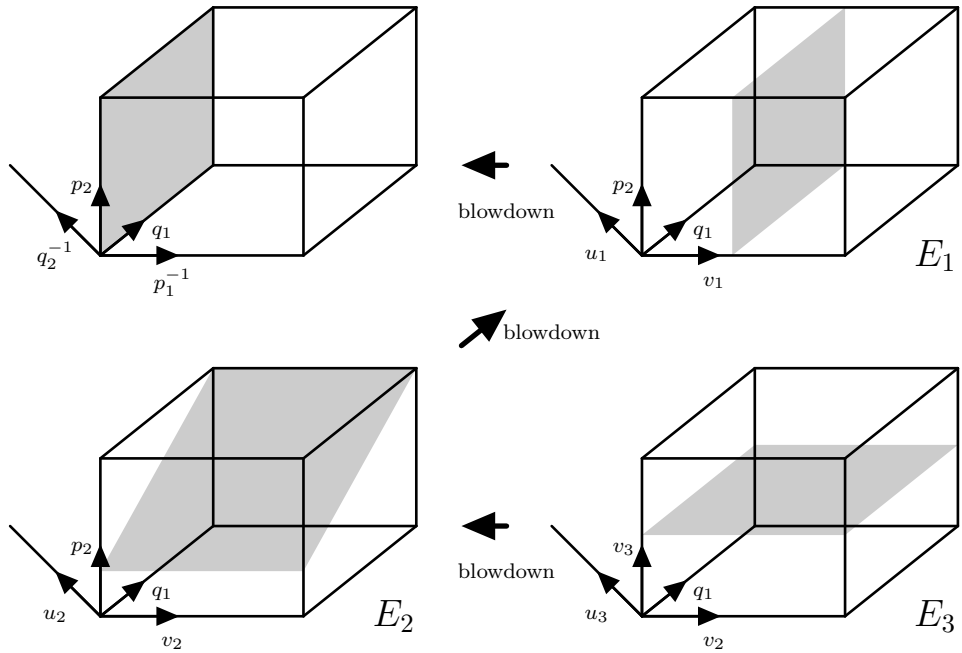

\begin{remark} \label{int_centers}
Some centers (e.g., $C_1$ and $C_9$) intersect with each other but do not have  inclusion relation. In this case, the variety depends on the order of blowups. However, since generic points are not in the intersection points, the varieties are pseudo-isomorphic with each other.   
\end{remark}

In both cases, the inclusion relations of total transforms of exceptional divisors $E_i$'s are the same with \eqref{inclusion} as
\begin{align}
&E_1\supset E_2 \supset E_3 \supset E_4, 
 &&E_5 \supset E_6,  &&E_7 \supset E_8, \nonumber\\  
&E_9\supset E_{10} \supset E_{11} \supset E_{12}, 
 &&E_{13} \supset E_{14},  &&E_{15} \supset E_{16}.
 \label{inclusion2}
\end{align}

\begin{proof}
The proof of the theorem is long but straightforward. We omit the detail, but 
we can show that any divisors in $\cX$ are mapped to divisors in $\cX$.
For example, in $A_5^{(1)}$ case, the exceptional divisor $E_4$ is described as $u_4=0$ in $U_4$, while $E_8$ as $u_8=0$ in $U_8$.
The mapping $\varphi$ from $U_4$ to $U_8$ under $a_0=0$ and $a_4=a$ is
\begin{align*}
(\bar{v}_8,\bar{u}_8,\bar{q}_2,\bar{p}_2)=&\left(-v_4, u_4, a q_1^{-1} + (b_2 + v_2 - b_2 u_4 v_2)(1 - u_4 v_2)^{-1}, q_1\right)
\end{align*} 
and hence $u_4=0$ implies $\bar{u}_8$ in generic (i.e. $q_1\neq 0$).
\end{proof}

Similarly computation to this proof yields the following theorem.

\begin{theorem}
The push-forward action of $\varphi$ on $H^2(\mathcal{X}, \Z)$ is as follows:\\
Case $A_2^{(1)}+A_2^{(1)}$:
\begin{align}\label{actionA2A2}
\begin{array}{l}
H_{q_1}\mapsto H_{p_2}, \quad
H_{p_1}\mapsto H_{q_2}+2H_{p_2}-E_{9,10,13,14}\\
H_{q_2}\mapsto H_{p_1}, \quad
H_{p_2}\mapsto H_{q_1}+2H_{p_1}-E_{1,2,5,6}\\
E_{1}\mapsto H_{p_2}-E_{10}, \quad 
E_{2}\mapsto H_{p_2}-E_{9}, \quad
E_{3}\mapsto E_{15}, \quad
E_{4}\mapsto E_{16}, \\
E_{5}\mapsto E_{11}, \quad
E_{6}\mapsto E_{12}, \quad
E_{7}\mapsto H_{p_2}-E_{14}, \quad 
E_{8}\mapsto H_{p_2}-E_{13}, \\
E_{9}\mapsto H_{p_1}-E_{2}, \quad
E_{10}\mapsto H_{p_1}-E_{1}, \quad 
E_{11}\mapsto E_{7}, \quad 
E_{12}\mapsto E_{8}, \\
E_{13}\mapsto E_{3}, \quad 
E_{14}\mapsto E_{4}, \quad
E_{15}\mapsto H_{p_1}-E_{6}, \quad
E_{16}\mapsto H_{p_1}-E_{5}
\end{array}
\end{align}

\noindent Case $A_5^{(1)}$:
\begin{align}\label{actionA5}
\begin{array}{l}
H_{q_1}\mapsto H_{p_2},\quad
H_{p_1}\mapsto H_{p_1}+H_{q_2}+H_{p_2}-E_{1,2,5,6}\\
H_{q_2}\mapsto H_{p_1},\quad
H_{p_2}\mapsto H_{q_1}+H_{p_1}+H_{p_2}-E_{9,10,13,14}\\
E_1\mapsto H_{p_1}-E_2,\quad 
E_2\mapsto H_{p_1}-E_1,\quad
E_3\mapsto E_7,\quad
E_4\mapsto E_8,\\
E_5\mapsto E_3,\quad 
E_6\mapsto E_4,\quad
E_7\mapsto H_{p_2}-E_6,\quad
E_8\mapsto H_{p_2}-E_5,\\
E_9\mapsto H_{p_2}-E_{10},\quad
E_{10}\mapsto H_{p_2}-E_9,\quad 
E_{11}\mapsto E_{15},\quad 
E_{12}\mapsto E_{16},\\
E_{13}\mapsto E_{11},\quad
E_{14}\mapsto E_{12},\quad
E_{15}\mapsto H_{p_1}-E_{14},\quad 
E_{16}\mapsto H_{p_1}-E_{13}
\end{array}
\end{align}
and the action on $H_2(\mathcal{X}, \Z)$ is given by \eqref{H2H2} with
\begin{align*}
J=\begin{bmatrix}
I_4&0\\0& -I_{16}
\end{bmatrix}.
\end{align*}
\end{theorem}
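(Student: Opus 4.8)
The plan is to compute $\varphi_*$ one basis element at a time on the generators $\{H_{q_1},H_{p_1},H_{q_2},H_{p_2},E_1,\dots,E_{16}\}$ of $H^2(\cX,\Z)$, and then to deduce the $H_2$-action purely formally. Since Theorem~\ref{SIC} guarantees that $\varphi$ is a pseudo-automorphism, Proposition~\ref{NSauto} tells us that $\varphi_*$ is a well-defined lattice automorphism preserving the intersection form; in particular there are no contracted divisors, so we may use $\varphi_*=(\varphi^{-1})^*$ freely. Thus for a divisor $D$ I compute $\varphi_*D$ by substituting the source coordinates, written as rational functions of the target coordinates via $\varphi^{-1}$, into the defining equation of $D$ and taking the strict transform.

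First I would handle the hyperplane classes. The two ``shift'' components $\bar p_1=q_2$ and $\bar p_2=q_1$ are degree one in a single variable, so a generic hyperplane $\{q_1=c\}$ maps to the hyperplane $\{p_2=c\}$ with no base locus, giving $\varphi_*(H_{q_1})=H_{p_2}$ and likewise $\varphi_*(H_{q_2})=H_{p_1}$ in both cases. For the genuinely nonlinear components I would solve $\varphi$ for the remaining source coordinate, re-express it in the coordinates of $\cX$, and clear the pole coming from the $q_j^{-1}$ term. In the $A_2^{(1)}+A_2^{(1)}$ case this turns $\{p_1=c\}$ into a surface of bidegree $(2,1)$ in the $(p_2,q_2)$-block, and localising at the centers lying over the relevant point at infinity (via the charts $U_9,U_{10},U_{13},U_{14}$) shows it passes simply through $C_9,C_{10},C_{13},C_{14}$, yielding $\varphi_*(H_{p_1})=H_{q_2}+2H_{p_2}-E_{9,10,13,14}$; the $A_5^{(1)}$ case is identical with multidegree $(0,1,1,1)$ and the same base centers, giving the stated $H_{p_1}+H_{q_2}+H_{p_2}-E_{1,2,5,6}$. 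The $H_i$-coefficients are read off from the multidegree and the $-E_k$ corrections from the base locus.

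Next I would treat the exceptional classes by tracing each $E_k$ through the explicit blow-up charts recorded with the centers $C_k$, exactly as in the prototype computation already given in the proof of Theorem~\ref{SIC}, where writing $E_4=\{u_4=0\}$ in $U_4$ and composing with $\varphi:U_4\to U_8$ forced $\bar u_8=0$, hence $E_4\mapsto E_8$. In general, writing $E_k=\{u_k=0\}$ and pushing forward, I read off $E_k\mapsto E_l$ when the image lands on another exceptional set, and $E_k\mapsto H_\ast-E_l$ when it lands on a (blown-up) coordinate hyperplane at infinity. The singularity-confinement sequences of Section~3, labelled by the boxed indices, together with the inclusion chains \eqref{inclusion2}, already predict the full permutation-with-shifts; the chart computations then only need to confirm each predicted target and, crucially, verify the multiplicities appearing in the $H_\ast-E_l$ entries.

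Throughout, Proposition~\ref{NSauto} supplies rigid consistency checks that I would use to catch errors: the assembled matrix $A$ must satisfy $A^{T}JA=J$, must be invertible over $\Z$, and must fix the canonical class $K_\cX=-2(H_{q_1}+H_{p_1}+H_{q_2}+H_{p_2})+E_1+\cdots+E_{16}$ attached to the invariant volume form. Once $A$ is determined and checked, the $H_2$-action requires no further geometry: it is given by $B=J(A^{-1})^{T}J$ as in \eqref{H2H2}. I expect the main obstacle to lie in the exceptional-divisor bookkeeping rather than in the hyperplane classes: because several centers meet without an inclusion relation (Remark~\ref{int_centers}) and the four-term infinitely-near chains force a careful separation of total and strict transforms, pinning down the exact $-E_l$ corrections (as opposed to merely which classes are exchanged) is where the local-chart analysis must be carried out most carefully.
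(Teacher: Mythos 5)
Your approach is the same as the paper's: the paper gives no proof beyond ``similar computation to this proof [of Theorem~\ref{SIC}] yields the following theorem,'' i.e.\ exactly the chart-by-chart push-forward of each generator that you outline, with the confinement sequences of Section~3 and the inclusion chains \eqref{inclusion2} predicting the targets and the local charts $U_k$ confirming the multiplicities. One correction to your proposed consistency check: $A^{T}JA=J$ is \emph{not} valid here. In dimension four the intersection pairing is between $H^2(\cX,\Z)$ and $H_2(\cX,\Z)$, not on $H^2$ alone, so invariance reads $A^{T}JB=J$ with $B$ the induced $H_2$-action --- which is precisely why \eqref{H2H2} returns $B=J(A^{-1})^{T}J\neq A$ (compare Example~\ref{SCT}, where $E_i\mapsto H-E_{\ast}$ but $e_i\mapsto 2h-e_{\ast}$). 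Running $A^{T}JA=J$ on a correct $A$ would fail and send you hunting for a nonexistent error; the usable checks are $A\in GL_{20}(\Z)$, preservation of $K_{\cX}$, and the compatibility of $A$ with $B$ via \eqref{H2H2}.
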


The actions \eqref{actionA2A2} and \eqref{actionA5} correspond to
singularity patterns in the previous section.
The pull-back actions are given by their inverse. 

\begin{corollary}
Both the degrees of mappings \eqref{Map1} and \eqref{Map2} grow quadratically.  
\end{corollary}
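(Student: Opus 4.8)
The plan is to reduce the statement to a finite spectral computation for the explicit linear actions recorded in \eqref{actionA2A2} and \eqref{actionA5}. By Theorem~\ref{SIC} each of the mappings \eqref{Map1} and \eqref{Map2} lifts to a pseudo-automorphism of $\cX$, and, as observed just after the definition of pseudo-isomorphisms, a pseudo-automorphism and its inverse are both algebraically stable. Hence $(\varphi^n)^*=(\varphi^*)^n$ on $H^2(\cX,\Z)$, and by the discussion of the mapping degree in Section~2 the degree of the $n$-th iterate (in either convention) is the coefficient of the relevant $H_j$ in $(\varphi^*)^n(H_i)$. Thus the degree sequence is completely governed by the powers of the $20\times20$ integer matrix representing the pull-back $\varphi^*=(\varphi_*)^{-1}$ on the basis $\{H_{q_1},H_{p_1},H_{q_2},H_{p_2},E_1,\dots,E_{16}\}$, and the problem becomes one of linear algebra.

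First I would invert the push-forward matrix given by \eqref{actionA2A2} (respectively \eqref{actionA5}) to obtain $\varphi^*$, and compute its characteristic polynomial $\chi(\lambda)$. The aim of this step is to verify that $\chi$ is a product of cyclotomic polynomials and a power of $(\lambda-1)$, so that every eigenvalue has modulus $1$ and the spectral radius equals $1$; preservation of the intersection pairing and of the anti-canonical class (Proposition~\ref{NSauto} together with the symplectic invariance noted in Section~1) makes this the expected shape of $\chi$. Already at this stage the growth of $\deg(\varphi^n)$ is forced to be polynomial rather than exponential, i.e.\ the algebraic entropy vanishes.

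The decisive step is to pin down the precise polynomial order, and this is where the real work lies. Since all eigenvalues lie on the unit circle, the order of growth equals one less than the size of the largest Jordan block attached to a unit-modulus eigenvalue; quadratic growth is therefore equivalent to the existence of a Jordan block of size exactly $3$ for the eigenvalue $1$, with no larger block occurring anywhere. I would establish this by computing the ranks of $(\varphi^*-\mathrm{id})^k$ for $k=1,2,3$ on $H^2(\cX,\Z)$, or, more transparently, by iterating the linear action directly on $H_{q_1},H_{p_1},H_{q_2},H_{p_2}$ and exhibiting that the coefficients of the $H_j$ in $(\varphi^*)^n(H_i)$ are explicit quadratic polynomials in $n$ with nonzero leading coefficient. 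Carrying out the same computation separately for the matrix of \eqref{actionA2A2} and for that of \eqref{actionA5} yields the conclusion for both mappings.

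The hard part will be the Jordan-structure analysis: knowing only that the spectral radius is $1$ bounds the growth as subexponential but permits any polynomial order up to the size of the matrix, so one must rule out linear growth (largest block of size $2$) and cubic or higher growth (block of size $\ge4$). This requires the exact generalized-eigenspace structure at $\lambda=1$, a mechanical but delicate rank computation on a $20$-dimensional lattice. An equivalent and perhaps safer route is to note that $\deg(\varphi^n)$ satisfies the linear recurrence attached to $\chi(\lambda)$ by Cayley--Hamilton, compute the first several degrees directly from $(\varphi^*)^n(H_i)$, fit them to a quadratic in $n$, and verify the fit persists by the recurrence. Either route completes the proof that both degree sequences grow quadratically.
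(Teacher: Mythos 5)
Your proposal is correct and follows essentially the same route as the paper: the paper's proof likewise invokes algebraic stability to reduce the degree to the $H$-coefficients of $(\varphi^*)^n$ and then observes that the Jordan form of $\varphi^*$ consists of a single $3\times 3$ unipotent block together with seventeen $1\times 1$ blocks of modulus one, giving quadratic growth. Your additional remarks on how to verify the Jordan structure (ranks of $(\varphi^*-\mathrm{id})^k$, or fitting the degree sequence to the recurrence from $\chi$) simply spell out the computation the paper leaves implicit.
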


\begin{proof}
As mentioned in Section 2, the degrees are given by the coefficients of $H_i$'s  of $(\varphi^*)^n$, while 
the Jordan blocks of $\varphi^*$ consist of 
$$ \begin{bmatrix}1&1&0\\0&1&1\\0&0&1 \end{bmatrix} $$
and seventeen $1\times 1$ matrices whose absolute value is 1. 
\end{proof}

\begin{theorem}
 For Case $A_2^{(1)}+A_2^{(1)}$,
the linear system of the anticanonical divisor class $\delta=2 \sum_{i=1}^2(H_{q_i}+H_{p_i}) - \sum_{i=1}^{16} E_i$ is given by 
\begin{align}
(\alpha_0 +\alpha_1 I_1)(\beta_0+\beta_1 I_2)=&0
\end{align}
for any  $(\alpha_0:\alpha_1), (\beta_0:\beta_1) \in \P^1$, where $I_i$ are given by \eqref{I12} and fibers $\alpha_0+ \alpha_1I_1=0$ and $\alpha_0+\alpha_1I_2=1$ 
are mapped to each other, while for Case $A_5^{(1)}$, 
the linear system is given by 
\begin{align}
\alpha_0 +\alpha_1 I_1+\alpha_2 I_2=&0,
\end{align}
for any  $(\alpha_0:\alpha_1:\alpha_2) \in \P^2$, where $I_i$ are given by \eqref{I12.2}
and each fiber is preserved. 
\end{theorem}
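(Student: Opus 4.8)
The plan is to identify $\delta$ as the anticanonical class, describe $H^0(\cX,\delta)$ as an explicit space of polynomials on $(\P^1)^4$, and then read off both the fibration structure and the action of $\varphi$ from the conserved quantities. First I would check that $\delta=-K_{\cX}$. The anticanonical class of $(\P^1)^4$ is $2(H_{q_1}+H_{p_1}+H_{q_2}+H_{p_2})$, and since each of the sixteen centers $C_i$ of Theorem~\ref{SIC} has codimension $2$, every blow-up contributes $-K\mapsto \pi_i^*(-K)-E_i$. Composing the sixteen blow-ups gives $-K_{\cX}=2\sum_{i=1}^2(H_{q_i}+H_{p_i})-\sum_{i=1}^{16}E_i=\delta$. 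Consequently, a section of $\delta$ is the same datum as a polynomial $P(q_1,p_1,q_2,p_2)$ of degree at most $2$ in each variable whose proper transform passes through every $C_i$ with the multiplicities dictated by the inclusions \eqref{inclusion2}. Thus $|\delta|$ is cut out inside the $81$-dimensional space $H^0((\P^1)^4,(2,2,2,2))$ by the vanishing conditions along the $C_i$.

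For Case $A_2^{(1)}+A_2^{(1)}$ I would exploit that $\cX$ is a direct product. The centers $C_1,\dots,C_8$ involve only $(q_1,p_1)$ while $C_9,\dots,C_{16}$ involve only $(q_2,p_2)$, and their inclusion relations in \eqref{inclusion2} split accordingly; hence $\cX=S\times S$, where $S$ is the $A_2^{(1)}$ surface obtained by blowing up $(\P^1)^2$ at the eight nested centers. Then $\delta=\mathrm{pr}_1^*(-K_S)+\mathrm{pr}_2^*(-K_S)$ and Künneth gives $H^0(\cX,\delta)=H^0(S,-K_S)\otimes H^0(S,-K_S)$. On the rational (elliptic) surface $S$ the anticanonical system is the pencil spanned by the constant section and the conserved quantity, $H^0(S,-K_S)=\langle 1,I_1\rangle$ (respectively $\langle 1,I_2\rangle$ for the second factor); this I would verify by checking in the charts $U_1,\dots,U_8$ that $I_1$ of \eqref{I12} passes through $C_1,\dots,C_8$, together with the fact $h^0(-K_S)=2$ for the $A_2^{(1)}$ surface. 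Therefore $H^0(\cX,\delta)=\langle 1,I_1,I_2,I_1I_2\rangle$, and the decomposable, fibre-type members are exactly the products $(\al_0+\al_1 I_1)(\beta_0+\beta_1 I_2)$, namely the union of one fibre from each of the two pencils $I_1$ and $I_2$.

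For Case $A_5^{(1)}$ the variety is not a product, so I would instead verify directly that the expressions $I_1,I_2$ of \eqref{I12.2}, which have degree at most $2$ in each variable, pass through all sixteen centers by substituting the chart formulae of Theorem~\ref{SIC} and using \eqref{inclusion2} to handle the nested centers $C_2,C_3,C_4,C_6,C_8$ and their counterparts; a dimension count then shows the solution space of the vanishing conditions is exactly $\langle 1,I_1,I_2\rangle$, so $|\delta|\cong\P^2$ with members $\al_0+\al_1 I_1+\al_2 I_2=0$. For the action of $\varphi$ I would use that $\varphi^*\delta=\delta$, read off from \eqref{actionA2A2} and \eqref{actionA5}, so $\varphi^*$ permutes $H^0(\cX,\delta)$ linearly. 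A direct substitution of \eqref{Map1} gives $\varphi^*I_1=I_2$ and $\varphi^*I_2=I_1$ (consistent with $I_1+I_2$ and $I_1I_2$ being the Lax-pair invariants), so $\varphi$ interchanges the two rulings and sends the fibre $\{I_1=c\}$ to $\{I_2=c\}$; for \eqref{Map2} the analogous substitution gives $\varphi^*I_1=I_1$ and $\varphi^*I_2=I_2$, so each fibre of $(I_1,I_2)$ is preserved.

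The main obstacle is the dimension count ruling out extra anticanonical sections. Passing through the nested centers imposes higher-order tangency conditions rather than simple incidence, and these must be extracted by comparing the lower-order terms of the Laurent expansions, exactly as flagged after \eqref{inclusion}. In Case $A_2^{(1)}+A_2^{(1)}$ this difficulty is absorbed into the single-surface fact $h^0(-K_S)=2$, but in Case $A_5^{(1)}$, where no product splitting is available, one must carry out the nested-center bookkeeping on the full fourfold to confirm that the codimension of $\langle 1,I_1,I_2\rangle$ in $H^0((\P^1)^4,(2,2,2,2))$ is accounted for exactly.
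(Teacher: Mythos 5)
The paper states this theorem without any proof (it is followed immediately by the remark on the symplectic form), so there is no argument of the authors' to compare yours against; what follows is an assessment of your proposal on its own terms. Your overall strategy is sound. The identification $\delta=-K_{\cX}$ via the codimension-two blow-up formula is correct and consistent with the paper's own computation of the divisor of the volume form in the remark following the theorem. The K\"unneth reduction in the $A_2^{(1)}+A_2^{(1)}$ case is a genuinely clean way to organize the argument (note that the strict product $S\times S$ and the fourfold of Theorem~\ref{SIC} differ over the intersections of centers such as $C_1\cap C_9$, but by Remark~\ref{int_centers} they are pseudo-isomorphic, which preserves $h^0$ of divisor classes, so the reduction is legitimate). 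You are also right, and it is worth saying explicitly, that $H^0(\cX,\delta)=\langle 1,I_1,I_2,I_1I_2\rangle$ is four-dimensional, so the products $(\al_0+\al_1I_1)(\beta_0+\beta_1I_2)$ are only the decomposable members of $|\delta|$; the theorem's statement is loose on this point (and ``$\al_0+\al_1I_2=1$'' is evidently a typo for ``$=0$''). I checked by direct substitution that $I_1\circ\varphi=I_2$ for mapping \eqref{Map1}, confirming the fiber-swapping claim.

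The genuine gap is that the two decisive verifications are described but not performed, and they are exactly the content of the theorem. First, the incidence claim: that the homogenizations of $I_1$ and $I_2$ (and, in the product case, of $I_1$ alone on each factor) vanish along all the nested centers to the correct order. For the infinitely near centers $C_2,C_3,C_4$ (and their images under the symmetries) this is not mere incidence but a tangency condition read off from the charts $U_i$ of Theorem~\ref{SIC}, and nothing in your write-up exhibits even one such computation. Second, the exhaustiveness claim in the $A_5^{(1)}$ case: that the sixteen (nested) vanishing conditions cut the $81$-dimensional space $H^0((\P^1)^4,\mathcal{O}(2,2,2,2))$ down to exactly $\langle 1,I_1,I_2\rangle$. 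A priori each two-dimensional center imposes more than one linear condition on sections of a fixed bidegree, the conditions are far from independent, and you give no mechanism (cohomological or by explicit linear algebra) for certifying that the solution space has dimension exactly $3$ rather than merely containing $\langle 1,I_1,I_2\rangle$. In the $A_2^{(1)}+A_2^{(1)}$ case you correctly outsource this to $h^0(-K_S)=2$ for a rational elliptic surface, but that fact itself requires knowing that the eight centers on each factor are precisely the (infinitely near) base points of the biquadratic pencil $\langle 1, I_i\rangle$, which is again the unverified incidence claim. As it stands your text is a correct plan of proof rather than a proof.
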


\begin{remark}
In both cases the divisor defined by the coefficients of the symplectic form 
coincides with the canonical divisor. Indeed, for Case $A_2^{(1)}+A_2^{(1)}$, 
the divisor class corresponding to $dq_i\wedge dp_i$  is 
\begin{align*}
i=1:&-2(H_{q_1}-E_{1,5})-2(H_{p_1}-E_{1,7})-3E_{1-2}-2 E_{2-3}-E_{3-4}\\
& -E_{5-6}-E_{7-8}=-2H_{q_1}-2H_{p_1}+ E_{1,\dots,8}, \\
i=2:&\quad  q_1\leftrightarrow q_2, p_1\leftrightarrow p_2, E_j \leftrightarrow E_{j+8} \ (j=1,\dots,8) \mbox{ in the above},
\end{align*}
where $E_{i-j}$ denotes $E_i-E_j$,
while for Case $A_5^{(1)}$, that is
\begin{align*}
i=1:&-2(H_{q_1}-E_{9,13})-2(H_{p_1}-E_{1,7})-E_{1-2}-E_{7-8}-2E_{9-10}-2E_{10-11}\\
&-E_{11-12}-E_{13-14}=-2H_{q_1}-2H_{p_1}+ E_{1,2,7,8,11,12,13,14}\quad (i=1), \\
i=2:&\quad  q_1\leftrightarrow q_2, p_1\leftrightarrow p_2, E_j \leftrightarrow E_{j+8} \ (j=1,\dots,8) \mbox{ in the above}.
\end{align*}
\end{remark}

Hence, for Case $A_2^{(1)}+A_2^{(1)}$, the coefficients of the volume form corresponds to a decomposition of the anti-canonical divisor
\begin{align}
-K_{\cX}=&
2(H_{q_1}-E_{1,5})+2(H_{p_1}-E_{1,7})+2(H_{q_2}-E_{9,13})+2(H_{p_2}-E_{9,15})\nonumber\\
&+3E_{1-2}+2 E_{2-3}+E_{3-4}+E_{5-6}+E_{7-8}\nonumber\\
&+3E_{9-10}+2E_{10-11}+E_{11-12}+E_{13-14}+E_{15-16},\label{KXdecomposition}
\end{align}
while for Case $A_5^{(1)}$ it is 
\begin{align}
-K_{\cX}=&(H_{q_1}\leftrightarrow H_{q_2} \mbox{ in \eqref{KXdecomposition}}). \label{KXdecomposition2}
\end{align}

The above decompositions is left fixed by the action of the mapping. For example, in the case $A_2^{(1)}+A_2^{(1)}$ if we set $-K_{\cX}=D_1+...+D_{14}$ where  
$D_1=E_{1-2}$, $D_2=E_{2-3}$, $D_3=E_{3-4}$,  
$D_4=2H_{q_1}-E_{1,5}$, $D_5=E_{5-6}$, $D_6=2H_{p_1}-E_{1,7}$, 
$D_7=E_{7-8}$, $D_8=E_{9-10}$, $D_9=E_{10-11}$, $D_{10}=E_{11-12}$,  
$D_{11}=H_{q_2}-E_{9,13}$, $D_{12}=E_{13-14}$, $D_{13}=H_{p_2}-E_{9,15}$, 
$D_{14}=E_{15-16}$, then we have
\begin{align*}
\varphi_*:&(D_1, D_2, ..., D_{14})\mapsto\\
&\quad
(D_8, D_{13}, D_{14}, D_{9}, D_{10}, D_{11}, D_{12}, D_{1}, D_{6}, D_{7}, D_2, D_3, D_4, D_5 ).
\end{align*}
The set $\{D_1, D_2, ..., D_{14}\}$ is important because its orthogonal complement 
gives the symmetry group of the variety.

\section{Symmetries and deautonomisation}

Let us fix the decomposition of the anti-canonical divisor as \eqref{KXdecomposition}
or \eqref{KXdecomposition2}.

\begin{definition} An automorphism $s$ of the N\'eron-Severi bilattice is called a Cremona isometry if the following three properties are satisfied:\\
(a) $s$ preserves the intersection form;\\
(b) $s$ leaves the decomposition of $-K_{\cX}$ fixed;\\
(c) $s$ leaves the semigroup of effective classes of divisors invariant.
\end{definition}

In general, if a birational mapping on $\C^N$ can be lifted to a seudo-automorphism on $\cX$,
its action on the resulting N\'eron-Severi bilattice is always a Cremona isometry.
In order to consider the inverse problem, i.e. from a Cremona isometry to a birational mapping, at least we need to allow the mapping to move the centers of blow-ups, 
but keeping one of the decomposition of the anti-canonical divisor $\sum_i m_i D_i$ ($m_i\geq 1$). 
Here, the birational mapping is   
lifted to an isomorphism from $\cX_{\bf a}$ to $\cX_{{\bf a}'}$, where suffix ${\bf a}$
denotes parameters fixing the centers of blowups. 
Note that $\sum_i m_i D_i$ is the unique anti-canonical divisor for generic ${\bf a}$,
but not unique for the original $\cX$ and the deautonomisation depends on the choice of them. 
Here, we fix one of anti-canonical divisors of $\cX$.
This situation is the same with two dimensional case.
See \cite{CDT2017, WRG2017} in details.

In this section we construct a group of Cremona isometries for the $A_2^{(1)}+A_2^{(1)}$ and the $A_5^{(1)}$ cases and 
realise them as groups of birational mappings. Note that we do not know a canonical way to find root basis in $H^2(\cX_{{\bf a}},\Z)$, and hence we can not detect whether there are 
Cremona isometries outside of those groups or not. 
However, those groups act on a $\Z^6$ lattice in $H_2(\cX_{{\bf a}},\Z)$ nontrivially, which is the largest dimensional lattice orthogonal to the elements of decomposition of the anti-canonical divisor.\\

\noindent Case $A_2^{(1)}+A_2^{(1)}$:\\
Let $\cX_A$ denote a family of the space of initial conditions constructed in the previous section as
$$\cX_A:=\{\cX_{{\bf a}} \mbox{ {\rm in \S4}} ~|~{\bf a}=(a_0^{(1)},a_1^{(1)},a_2^{(1)},a_0^{(2)},a_1^{(2)},a_2^{(2)};b^{(1)},b^{(2)})\in \C^8 \}.$$ 
Then, there is a natural isomorphism between $H^2(\cX_{\bf a},\Z)\times H_2(\cX_{\bf a},\Z) \simeq H^2(\cX,\Z)\times H_2(\cX,\Z)$ as abstract lattices.

Let us define root vectors $\alpha_i^{(j)}$ and co-root vectors  $\check{\alpha}_i^{(j)}$ ($i=0,1,2$, $j=1,2$) so that the latter is orthogonal to all $D_i$, $i=1,...,14$, as
\begin{align}
\begin{array}{lll}
\alpha_0^{(1)}=H_{q_1}+H_{p_1}-E_{1,2,3,4}, 
&\alpha_1^{(1)}=H_{p_1}-E_{5,6}, 
&\alpha_2^{(1)}=H_{q_1}-E_{7,8}, \\
\alpha_0^{(2)}=H_{p_2}+H_{q_2}-E_{9,10,11,12},
&\alpha_1^{(2)}=H_{p_2}-E_{13,14}, 
&\alpha_2^{(2)}=H_{q_2}-E_{15,16}
\end{array}
\end{align}
and
\begin{align}
\begin{array}{lll}
\check{\alpha}_0^{(1)}=h_{q_1}+h_{p_1}-e_{1,2,3,4}, 
&\check{\alpha}_1^{(1)}=h_{q_1}-e_{5,6},
&\check{\alpha}_2^{(1)}=h_{p_1}-e_{7,8},\\
\check{\alpha}_0^{(2)}=h_{q_2}+h_{p_2}-e_{9,10,11,12},
&\check{\alpha}_1^{(2)}=h_{q_2}-e_{13,14},
&\check{\alpha}_2^{(2)}=h_{p_2}-e_{15,16}
\end{array}.
\end{align}
Then, the pairing $\langle \alpha_i^{(j)}, \check{\alpha}_k^{(l)} \rangle$ induces 
two of the affine root system of type $A_2^{(1)}$
with the null vectors $\delta^{(1)}=2H_{q_1}+2H_{p_1}-E_{1,\dots,8}$ and 
$\delta^{(2)}=2H_{q_2}+2H_{p_2}-E_{9,\dots,16}$ and
the null co-root vectors $\check{\delta}^{(1)}=2h_{q_1}+2h_{p_1}-e_{1,\dots,8}$ and 
$\check{\delta}^{(2)}=2h_{q_2}+2h_{p_2}-e_{9,\dots,16}$.
The Cartan matrix and the Dynkin diagram are 
\begin{center}
$
\begin{bmatrix}
2&-1&-1&0&0&0\\
-1&2&-1&0&0&0\\
-1&-1&2&0&0&0\\
0&0&0&2&-1&-1\\
0&0&0&-1&2&-1\\
0&0&0&-1&-1&-2
\end{bmatrix}$
\raisebox{-17mm}{
\begin{tikzpicture}[line cap=round,line join=round,>=triangle 45,x=0.9cm,y=0.9cm]
\clip(-2,-2) rectangle (5.4,1.5);
\draw [line width=1.pt] (0.,0.732)-- (-1.,-1.);
\draw [line width=1.pt] (-1.,-1.)-- (1.,-1.);
\draw [line width=1.pt] (1.,-1.)-- (0.,0.732);
\draw [fill=black] (0.,0.732) circle (2.5pt);
\draw [fill=black] (-1.,-1.) circle (2.5pt);
\draw [fill=black] (1.,-1.) circle (2.5pt);
\draw [line width=1.pt] (0+3.,0.732)-- (-1+3.,-1.);
\draw [line width=1.pt] (-1.+3,-1.)-- (1.+3,-1.);
\draw [line width=1.pt] (1.+3,-1.)-- (0.+3,0.732);
\draw [fill=black] (0.+3,0.732) circle (2.5pt);
\draw [fill=black] (-1.+3,-1.) circle (2.5pt);
\draw [fill=black] (1.+3,-1.) circle (2.5pt);
\begin{small}
\draw [color=black] (0.,1.2) node {$\alpha_0^{(1)}$};
\draw [color=black] (-1.,-1.5) node {$\alpha_1^{(1)}$};
\draw [color=black] (1.,-1.5) node {$\alpha_2^{(1)}$};
\draw [color=black] (3.,1.2) node {$\alpha_0^{(2)}$};
\draw [color=black] (2.,-1.5) node {$\alpha_1^{(2)}$};
\draw [color=black] (4.,-1.5) node {$\alpha_2^{(2)}$};
\end{small}
\end{tikzpicture}}.
\end{center}
Let $\widetilde{W}(A_2^{(1)}+A_2^{(1)})$ denote the extended affine Weyl group $\operatorname{Aut}(A_2^{(1)}+A_2^{(1)})\ltimes  (W(A_2^{(1)}) \times W(A_2^{(1)}))$, where $\operatorname{Aut}(A_2^{(1)}+A_2^{(1)})$ is the group of automorphisms of Dynkin diagram. 

Since $\check{\alpha}_i^{(j)}$'s are orthogonal to the elements of the decomposition of the anti-canonical divisor.  
Thus, if we define the action of the simple reflection $w_{\alpha_i^{(j)}}$ on the N\'eron-Severi bilattice as usual as
\begin{align}\label{Rootaction}
w_{\alpha_i^{(j)}}(D)= D+\langle D, \check{\alpha}_i^{(j)}   \rangle \alpha_i^{(j)}, \quad 
w_{\alpha_i^{(j)}}(d)=d+\langle \alpha_i^{(j)}, d   \rangle \check{\alpha}_i^{(j)}
\end{align}
for $D\in H^2(\cX_{{\bf a}}, \Z)$ and $d\in H_2(\cX_{{\bf a}}, \Z)$, it satisfies Condition (a) and (b) for Cremona isometries (Condition (c) is verified by realising as a birational mapping).
Moreover, the group of Dynkin automorphisms is generated by 
\begin{align*}
\sigma_{01}^{(1)}: &
  \alpha_0^{(1)} \leftrightarrow \alpha_1^{(1)}, \quad
  \check{\alpha}_0^{(1)} \leftrightarrow \check{\alpha}_1^{(1)}, \\
  &H_{p_1} \leftrightarrow H_{q_1}+H_{p_1}-E_{1}-E_{2}, \quad 
  E_{1} \leftrightarrow H_{q_1}-E_{2}, \\
  &E_{2} \leftrightarrow H_{q_1}-E_{1}, \quad
  E_{3} \leftrightarrow E_{5}, \quad
  E_{4} \leftrightarrow E_{6}, \\
\sigma_{12}^{(1)}:\ & 
  \alpha_1^{(1)} \leftrightarrow \alpha_2^{(1)}\quad
  \check{\alpha}_1^{(1)} \leftrightarrow \check{\alpha}_2^{(1)}, \\
  &H_{1} \leftrightarrow H_{4}, \quad
  E_{5} \leftrightarrow E_{7}, \quad
  E_{6} \leftrightarrow E_{8}, \\
\sigma_{01}^{(2)}:\ & 
  \alpha_0^{(2)} \leftrightarrow \alpha_1^{(2)}, \quad
  \check{\alpha}_0^{(2)} \leftrightarrow \check{\alpha}_1^{(2)}, \\
  &H_{p_2} \leftrightarrow H_{p_2}+H_{q_2}-E_{9}-E_{10}, \quad 
  E_{9} \leftrightarrow H_{q_2}-E_{10}, \\
  &E_{10} \leftrightarrow H_{q_2}-E_{9}, \quad
  E_{11} \leftrightarrow E_{13}, \quad
  E_{12} \leftrightarrow E_{14}, \\
\sigma_{12}^{(2)}:\ & 
  \alpha_1^{(2)} \leftrightarrow \alpha_2^{(1)}, \quad
  \check{\alpha}_1^{(2)} \leftrightarrow \check{\alpha}_2^{(2)}, \\
  &H_{2} \leftrightarrow H_{3}, \quad
  E_{13} \leftrightarrow E_{15}, \quad
  E_{14} \leftrightarrow E_{16}, \\
\sigma^{(12)}:\ & 
  \alpha_i^{(1)} \leftrightarrow \alpha_i^{(2)}, \quad
  \check{\alpha}_i^{(1)} \leftrightarrow \check{\alpha}_i^{(2)}, \\
  &H_{1} \leftrightarrow H_{3}, \quad   H_{2} \leftrightarrow H_{4}, \\
  &E_{i} \leftrightarrow E_{i+8} \quad (\mbox{{\rm for} $i=1,2, \dots,8$}).
\end{align*}
with the action on $H_2(\mathcal{X}_{{\bf a}}, \Z)$ given by \eqref{H2H2},
where we omit to write for unchanged variables. It is easy to see that each one satisfies Condition (a) and (b) for a Cremona isometry.

\begin{theorem}
The extended affine Weyl group $\widetilde{W}(A_2^{(1)}+A_2^{(1)})$
act on the family of the space of initial conditions $\cX_A$
such that each element $w$ acts as a linear transformation on the set of parameters 
$A=\C^8$ and as a pseudo isomorphisms from $X_{\bf a}$ to $X_{w({\bf a})}$ for generic ${\bf a} \in A$. 
\end{theorem}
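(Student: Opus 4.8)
The plan is to realize each generator of $\widetilde{W}(A_2^{(1)}+A_2^{(1)})$ as an explicit birational map of $\C^4$, to show that it lifts to a pseudo-isomorphism between members of the family $\cX_A$, to read off the accompanying parameter change, and to check that the induced action on the N\'eron-Severi bilattice agrees with the one already prescribed. Since $\widetilde{W}(A_2^{(1)}+A_2^{(1)})$ is generated by the simple reflections $w_{\alpha_i^{(j)}}$ together with the Dynkin automorphisms $\sigma_{01}^{(j)}$, $\sigma_{12}^{(j)}$, $\sigma^{(12)}$, it suffices to treat the generators: a pseudo-isomorphism is determined by its restriction to a dense open set, so composition of birational realizations corresponds to composition of bilattice actions, and the defining relations of $\widetilde{W}$ already hold on the bilattice by construction. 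Thus, once each generator is realized, the group law is automatic and we obtain a genuine action of $\widetilde{W}(A_2^{(1)}+A_2^{(1)})$.

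First I would dispatch the Dynkin automorphisms. The factor-swap $\sigma^{(12)}$ is realized by the honest automorphism $(q_1,p_1,q_2,p_2)\mapsto(q_2,p_2,q_1,p_1)$ of $(\P^1)^4$ together with the linear parameter swap $a_i^{(1)}\leftrightarrow a_i^{(2)}$, $b^{(1)}\leftrightarrow b^{(2)}$; one checks directly that it carries each center $C_i$ to $C_{i+8}$ (for instance $C_1:q_1^{-1}=p_1^{-1}=0$ goes to $C_9:q_2^{-1}=p_2^{-1}=0$), hence induces exactly $H_{q_1}\leftrightarrow H_{q_2}$, $H_{p_1}\leftrightarrow H_{p_2}$, $E_i\leftrightarrow E_{i+8}$. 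The remaining Dynkin automorphisms $\sigma_{01}^{(j)}$, $\sigma_{12}^{(j)}$ are realized by elementary birational involutions acting only in the single pair $(q_j,p_j)$ (inversions and affine shifts) that interchange the two blow-up towers sitting over a common base locus; each is confirmed to be a pseudo-isomorphism by the same center-tracking argument used in the proof of Theorem~\ref{SIC}.

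Next I would construct the simple reflections. These I would write in Noumi--Yamada form, i.e. as birational maps shifting one momentum-type coordinate by a residue term $a_i^{(j)}/f$ built from the local defining function $f$ of the relevant center, the reciprocal shift of parameters being dictated by the Cartan matrix so that $w_{\alpha_i^{(j)}}$ sends $\cX_{\bf a}$ to $\cX_{w({\bf a})}$. The finite reflections $w_{\alpha_1^{(j)}}$, $w_{\alpha_2^{(j)}}$ act nontrivially only on the towers $E_{5,6}$ (resp.\ $E_{7,8}$) and the attached hyperplane class, and are the simpler ones; the affine reflections $w_{\alpha_0^{(j)}}$, whose roots $H_{q_1}+H_{p_1}-E_{1,2,3,4}$ involve both hyperplane classes and the full length-four tower, require a genuine quadratic Cremona-type transformation and are the delicate cases. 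In each case the verification has two ingredients: that the map carries the center configuration of $\cX_{\bf a}$ to that of $\cX_{w({\bf a})}$ with the displacement recorded by a \emph{linear} change ${\bf a}\mapsto w({\bf a})$ of the eight parameters; and that no prime divisor is contracted by the lifted map or its inverse, which by the criterion in the pseudo-isomorphism subsection is precisely the pseudo-isomorphism property. The induced action on $H^2(\cX,\Z)$ is then computed as in the derivation of \eqref{actionA2A2} and matched against \eqref{Rootaction}.

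The main obstacle is the explicit realization and bookkeeping for the affine reflections $w_{\alpha_0^{(j)}}$: one must exhibit the quadratic transformation, propagate all four nested centers $C_1\supset C_2\supset C_3\supset C_4$ through it, and verify simultaneously the correct parameter shift and that the length-four tower is transported without collapsing any divisor. Once every generator is realized, Conditions~(a) and~(b) for a Cremona isometry hold by the bilattice formulas already recorded, Condition~(c) holds automatically because the isometry is now carried by an honest birational map whose pushforward preserves effectivity, and the group structure follows since the faithful correspondence between pseudo-isomorphisms and their bilattice actions converts the relations of $\widetilde{W}(A_2^{(1)}+A_2^{(1)})$ into relations among the birational realizations.
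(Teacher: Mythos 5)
Your proposal follows essentially the same route as the paper: the paper's proof consists precisely of exhibiting each generator $w_{\alpha_i^{(j)}}$, $\sigma_{01}^{(j)}$, $\sigma_{12}^{(j)}$, $\sigma^{(12)}$ as an explicit birational map on $(q_1,p_1,q_2,p_2;{\bf a})\in\C^{14}$ with a linear parameter shift, and your predicted forms (coordinate swaps and affine involutions for the Dynkin automorphisms, residue-type shifts $q\mapsto q-a/p$ for the finite reflections, and quadratic Cremona-type transformations for the affine reflections $w_{\alpha_0^{(j)}}$) match the formulas the paper actually writes down. The only difference is that the paper obtains these realizations via the factorisation technique of \cite{CDT2017} rather than deriving them from the center-tracking argument you sketch, but the logical structure of the proof is the same.
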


\begin{proof}
It is enough to give realisation of the generators as birational mappings on 
$$(q_1, p_2, q_2, p_1;a_0^{(1)},a_1^{(1)},a_2^{(1)},a_0^{(2)},a_1^{(2)},a_2^{(2)};b^{(1)},b^{(2)})\in \C^{14}.$$ 
The following list gives such realisation:
\begin{align*}
w_{\alpha_0^{(1)}}:\ &
  q_1  \leftrightarrow \disp \frac{q_1^2+q_1p_1-b^{(1)} q_1-a_0^{(1)}}{q_1+p_1-b^{(1)}}, \quad
  p_1  \leftrightarrow  \disp \frac{p_1^2+q_1p_1-b^{(1)} p_1+a_0^{(1)}}{q_1+p_1-b^{(1)}}, \\
  &a_0^{(1)} \leftrightarrow -a_0^{(1)}, \quad
  a_1^{(1)} \leftrightarrow a_0^{(1)}+a_1^{(1)}, \quad
  a_2^{(1)} \leftrightarrow a_0^{(1)}+a_2^{(1)}\\
w_{\alpha_1^{(1)}}:\ &
  q_1  \leftrightarrow q_1-a_1^{(1)}p_1^{-1}, \\
  &a_0^{(1)} \leftrightarrow a_0^{(1)}+a_1^{(1)}, \quad
  a_1^{(1)} \leftrightarrow -a_1^{(1)}, \quad
  a_2^{(1)} \leftrightarrow a_1^{(1)}+a_2^{(1)}\\
w_{\alpha_2^{(1)}}:\ &
  p_1  \leftrightarrow  p_1+a_2^{(1)}q_1^{-1}, \\
  &a_0^{(1)} \leftrightarrow a_0^{(1)}+a_2^{(1)}, \quad
  a_1^{(1)} \leftrightarrow a_1^{(1)}+a_2^{(1)}, \quad
  a_2^{(1)} \leftrightarrow -a_2^{(1)}\\
w_{\alpha_0^{(2)}}:\ &
  q_2  \leftrightarrow \disp \frac{q_2^2+p_2q_2-b^{(2)} q_2-a_0^{(2)}}{q_2+p_2-b^{(2)}}, \quad 
  p_2  \leftrightarrow \disp \frac{p_2^2+p_2q_2-b^{(2)} p_2+a_0^{(2)}}{q_2+p_2-b^{(2)}},\\
  &a_0^{(2)} \leftrightarrow -a_0^{(2)}, \quad
  a_1^{(2)} \leftrightarrow a_0^{(2)}+a_1^{(2)}, \quad
  a_2^{(2)} \leftrightarrow a_0^{(2)}+a_2^{(2)}\\
w_{\alpha_1^{(2)}}:\ &
  q_2  \leftrightarrow q_2-a_1^{(2)}p_2^{-1}, \\
  &a_0^{(2)} \leftrightarrow a_0^{(2)}+a_1^{(2)}, \quad
  a_1^{(2)} \leftrightarrow -a_1^{(2)}, \quad
  a_2^{(2)} \leftrightarrow a_1^{(2)}+a_2^{(2)}\\
w_{\alpha_2^{(2)}}:\ &
  p_2  \leftrightarrow p_2+a_2^{(2)}q_2^{-1}, \\
  &a_0^{(2)} \leftrightarrow a_2^{(1)}+a_2^{(2)}, \quad
  a_1^{(2)} \leftrightarrow a_1^{(2)}+a_2^{(2)}, \quad
  a_2^{(2)} \leftrightarrow -a_2^{(2)}
\end{align*}
and
\begin{align*}
\sigma_{01}^{(1)}:\ &
  p_1  \leftrightarrow -q_1-p_1+b^{(1)}, \quad
  a_0^{(1)} \leftrightarrow -a_1^{(1)}, \quad
  a_1^{(1)} \leftrightarrow -a_0^{(1)}, \quad
  a_2^{(1)} \leftrightarrow -a_2^{(1)}\\
\sigma_{12}^{(1)}:\ & 
  q_1  \leftrightarrow p_1, \quad
  a_0^{(1)} \leftrightarrow -a_0^{(1)}, \quad
  a_1^{(1)} \leftrightarrow -a_2^{(1)}, \quad
  a_2^{(1)} \leftrightarrow -a_1^{(1)}\\ 
\sigma_{01}^{(2)}:\ & 
  p_2  \leftrightarrow -q_2-p_2+b^{(2)}, \quad
  a_0^{(2)} \leftrightarrow -a_1^{(2)}, \quad
  a_1^{(2)} \leftrightarrow -a_0^{(2)}, \quad
  a_2^{(2)} \leftrightarrow -a_2^{(2)}\\
\sigma_{12}^{(2)}:\ & 
  q_2  \leftrightarrow p_2, \quad
  a_0^{(2)} \leftrightarrow -a_0^{(2)}, \quad
  a_1^{(2)} \leftrightarrow -a_2^{(2)}, \quad
  a_2^{(2)} \leftrightarrow -a_1^{(2)}\\
\sigma^{(12)}:\ & 
  q_1  \leftrightarrow q_2, \quad p_1  \leftrightarrow p_2, \\
  &a_i^{(1)} \leftrightarrow a_i^{(2)}, \quad  (\mbox{{\rm for} $i=0,1,2$}), \quad
  b^{(1)} \leftrightarrow b^{(2)}.
\end{align*}
 For these computations we used a factorisation formula proposed in \cite{CDT2017} for two-dimensional case, which also works well in the higher dimensional case. 
\end{proof}

The pull-back action $\varphi^*$ on the root lattice is
\begin{align}
(\alpha_0^{(j)}, \alpha_1^{(j)},\alpha_2^{(j)})\mapsto
(\alpha_1^{(j+1)}+\alpha_2^{(j+1)},-\alpha_2^{(j+1)}, \alpha_0^{(j+1)}+ \alpha_2^{(j+1)})
\end{align}
for $j=1,2 \mod 2$, and written by the generators as
\begin{align}
\varphi=\sigma^{(12)}
\circ w_{\alpha_1^{(2)}}\circ \sigma_{12}^{(2)} \circ \sigma_{01}^{(2)} 
\circ w_{\alpha_1^{(1)}}\circ \sigma_{12}^{(1)} \circ \sigma_{01}^{(1)}.
\end{align}
Its action on the variables becomes
\begin{align}\nonumber
&\left(q_1, p_2, q_2, p_1; a_0^{(1)},a_1^{(1)},a_2^{(1)},a_0^{(2)},a_1^{(2)},a_2^{(2)};b^{(1)},b^{(2)}\right)\\
\mapsto&
\Big(-p_2-q_2+b^{(2)}-\frac{a_2^{(2)}}{q_2}, q_1,
-q_1-p_1+b^{(1)}-\frac{a_2^{(1)}}{q_1}, q_2; \\
&\ 
 a_1^{(2)}+a_2^{(2)},-a_2^{(2)}, a_0^{(2)}+a_2^{(2)},
a_1^{(1)}+a_2^{(1)},-a_2^{(1)}, a_0^{(1)}+a_2^{(1)}
;b^{(2)},b^{(1)}\Big),\nonumber
\end{align}
which is the non-autonomous version of $\varphi$. 
The action $(\varphi^2)^*$ on the root lattice is a translation as
\begin{align}
(\alpha_0^{(j)}, \alpha_1^{(j)},\alpha_2^{(j)})\mapsto
(\alpha_0^{(j)}, \alpha_1^{(j)}-\delta^{(j)},\alpha_2^{(j)}+\delta^{(j)})
\end{align}
for $j=1,2$.\\

\noindent Case $A_5^{(1)}$:\\
Let $\cX_A$ denote a family of the space of initial conditions 
$$\cX_A:=\{\cX_{{\bf a}} \mbox{ {\rm in \S4}} ~|~{\bf a}=(a_0,a_1,a_2,a_3,a_4,a_5;b_1,b_2)\in \C^8 \}.$$ 
Let us define root vectors $\alpha_i$ and co-root vectors ($i=0,\dots,5$) as
\begin{align}
\begin{array}{lll}
\alpha_0=H_{q_1}+H_{p_2}-E_{3,4,9,10},
&\alpha_1=H_{q_2}-E_{15,16},
&\alpha_2=H_{p_2}-E_{5,6},\\
\alpha_3=H_{p_1}+H_{q_2}-E_{1,2,11,12},
&\alpha_4=H_{q_1}-E_{7,8},
&\alpha_5=H_{p_1}-E_{13,14}
\end{array}
\end{align}
and
\begin{align}
\begin{array}{lll}
\check{\alpha}_0=h_{p_1}+h_{q_2}-e_{1,2,3,4},
&\check{\alpha}_1=h_{p_2}-e_{15,16},
&\check{\alpha}_2=h_{q_2}-e_{5,6},\\
\check{\alpha}_3=h_{q_1}+h_{p_2}-e_{9,10,11,12},
&\check{\alpha}_4=h_{p_1}-e_{7,8},
&\check{\alpha}_5=h_{q_1}-e_{13,14}.
\end{array}.
\end{align}
Then, the pairing $\langle \alpha_i, \check{\alpha}_j \rangle$ induces 
the affine root system of type $A_5^{(1)}$
with the null vectors $\delta=2H_{q_1,p_1,q_2,p_2}-E_{1,\dots,16}$
and the null co-root vector $\check{\delta}=2h_{q_1,p_1,q_2,p_2} -e_{1,\dots,16}$.
The Cartan matrix and the Dynkin diagram are 
\begin{center}
$\begin{bmatrix}
2&-1&0&0&0&-1\\
-1&2&-1&0&0&0\\
0&-1&2&-1&0&0\\
0&0&-1&2&-1&0\\
0&0&0&-1&2&-1\\
-1&0&0&0&-1&-2
\end{bmatrix}$
\raisebox{-17mm}{
\begin{tikzpicture}[line cap=round,line join=round,>=triangle 45,x=0.9cm,y=0.9cm]
\clip(-2,-2) rectangle (5.4,1.5);
\draw [line width=1.pt] (1.5,0.732)-- (-1.,-1.);
\draw [line width=1.pt] (-1.,-1.)-- (0.25,-1.);
\draw [line width=1.pt] (0.25,-1.)-- (1.5,-1.);
\draw [line width=1.pt] (1.5,-1.)-- (2.75,-1.);
\draw [line width=1.pt] (2.75,-1.)-- (4.0,-1.);
\draw [line width=1.pt] (1.5,0.732)-- (4.0,-1.);
\draw [fill=black] (1.5,0.732) circle (2.5pt);
\draw [fill=black] (-1.,-1.) circle (2.5pt);
\draw [fill=black] (0.25,-1.) circle (2.5pt);
\draw [fill=black] (1.5,-1) circle (2.5pt);
\draw [fill=black] (2.75,-1) circle (2.5pt);
\draw [fill=black] (4.0,-1) circle (2.5pt);
\begin{small}
\draw [color=black] (1.5,1.2) node {$\alpha_0$};
\draw [color=black] (-1.,-1.5) node {$\alpha_1$};
\draw [color=black] (0.25,-1.5) node {$\alpha_2$};
\draw [color=black] (1.5,-1.5) node {$\alpha_3$};
\draw [color=black] (2.75,-1.5) node {$\alpha_4$};
\draw [color=black] (4.,-1.5) node {$\alpha_5$};
\end{small}
\end{tikzpicture}}.
\end{center}
Let $\widetilde{W}(A_5^{(1)})$ denote the extended affine Weyl group $\operatorname{Aut}(A_5^{(1)})\ltimes  W(A_5^{(1)})$.

We define the action of the simple reflection $w_{\alpha_i}$ on the N\'eron-Severi bilattice as \eqref{Rootaction}.
The group of Dynkin automorphisms is generated by 
\begin{align*}
\sigma_{01}: &
  \alpha_0 \leftrightarrow \alpha_1,\quad
  \alpha_2 \leftrightarrow \alpha_5,\quad
  \alpha_3 \leftrightarrow \alpha_4,\quad
  \check{\alpha}_0 \leftrightarrow \check{\alpha}_1,\quad
  \check{\alpha}_2 \leftrightarrow \check{\alpha}_5,\quad
  \check{\alpha}_3 \leftrightarrow \check{\alpha}_4,\\
  &H_{q_2} \leftrightarrow H_{p_2},\quad
  H_{q_1} \leftrightarrow H_{p_1,q_2}-E_{1,2},\quad 
  H_{q_2} \leftrightarrow H_{q_1,p_2}-E_{9,10},\\&
  E_{1} \leftrightarrow H_{p_2}-E_{10},\quad
  E_{2} \leftrightarrow H_{p_2}-E_{9},\quad
  E_{3} \leftrightarrow E_{15},\quad
  E_{4} \leftrightarrow E_{16},\\
  &E_{5} \leftrightarrow E_{13},\quad
  E_{6} \leftrightarrow E_{14},\quad
  E_{7} \leftrightarrow E_{11},\quad
  E_{8} \leftrightarrow E_{12},\\&
  E_{9} \leftrightarrow H_{p_1}-E_{2},\quad
  E_{10} \leftrightarrow H_{p_1}-E_{1},\\
\sigma_{12}:\ & 
  \alpha_0 \leftrightarrow \alpha_3, \quad
  \alpha_1 \leftrightarrow \alpha_2, \quad
  \alpha_4 \leftrightarrow \alpha_5, \quad
  \check{\alpha}_0 \leftrightarrow \check{\alpha}_3,\quad
  \check{\alpha}_1 \leftrightarrow \check{\alpha}_2,\quad
  \check{\alpha}_4 \leftrightarrow \check{\alpha}_5,\\
  &H_{q_1} \leftrightarrow H_{p_1},\quad 
  H_{q_1} \leftrightarrow H_{p_2},\\&
  E_{1} \leftrightarrow E_{9},\quad
  E_{2} \leftrightarrow E_{10},\quad
  E_{3} \leftrightarrow E_{11},\quad
  E_{4} \leftrightarrow E_{12},\\
  &E_{5} \leftrightarrow E_{15},\quad
  E_{6} \leftrightarrow E_{16},\quad
  E_{7} \leftrightarrow E_{13},\quad
  E_{8} \leftrightarrow E_{14},
\end{align*}
with the action on $H_2(\mathcal{X}_{{\bf a}}, \Z)$ given by \eqref{H2H2}.

\begin{theorem}
The extended affine Weyl group $\widetilde{W}(A_5^{(1)})$
act on the family of the space of initial conditions $\cX_A$
such that each element $w$ acts as a linear transformation on the set of parameters 
$A=\C^8$ and as a pseudo-isomorphisms from $X_{\bf a}$ to $X_{w({\bf a})}$ for generic ${\bf a} \in A$. 
\end{theorem}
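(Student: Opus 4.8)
The plan is to follow the same strategy as in the $A_2^{(1)}+A_2^{(1)}$ case. The action of $\widetilde{W}(A_5^{(1)})$ on the N\'eron-Severi bilattice has already been prescribed---the simple reflections $w_{\alpha_i}$ by \eqref{Rootaction} and the Dynkin automorphisms $\sigma_{01},\sigma_{12}$ explicitly---and Conditions (a) and (b) for a Cremona isometry hold by construction. It therefore suffices to realise each generator as an explicit birational mapping on the space with coordinates $(q_1,p_1,q_2,p_2)$ and parameters $(a_0,\dots,a_5;b_1,b_2)$ that lifts to a pseudo-isomorphism from $\cX_{\bf a}$ to $\cX_{w({\bf a})}$; such a realisation simultaneously establishes Condition (c) and the pseudo-isomorphism property.

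First I would write down the realisations of the four short-root reflections. Since $\alpha_1=H_{q_2}-E_{15,16}$, $\alpha_2=H_{p_2}-E_{5,6}$, $\alpha_4=H_{q_1}-E_{7,8}$ and $\alpha_5=H_{p_1}-E_{13,14}$ each involve a single hyperplane class, their reflections should be additive shifts of one variable of the form $q_i\mapsto q_i-a_k/p_i$ or $p_i\mapsto p_i+a_k/q_i$, together with the induced affine action on the $a_k$, exactly as for $w_{\alpha_1^{(1)}}$ and $w_{\alpha_2^{(1)}}$ in the previous case. For the two Dynkin automorphisms $\sigma_{01},\sigma_{12}$ I would read off birational involutions directly from the prescribed lattice action, just as $\sigma_{01}^{(1)}$ and $\sigma^{(12)}$ were obtained before. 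The delicate part is the pair of long-root reflections $w_{\alpha_0}$ and $w_{\alpha_3}$: because $\alpha_0=H_{q_1}+H_{p_2}-E_{3,4,9,10}$ and $\alpha_3=H_{p_1}+H_{q_2}-E_{1,2,11,12}$ each combine two distinct hyperplane classes drawn from different singularity sequences, the corresponding maps are rational transformations coupling $(q_1,p_1)$ and $(q_2,p_2)$, in form analogous to $w_{\alpha_0^{(1)}}$ but no longer confined to a single canonical pair.

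With the generators in hand, I would verify for each that it is a pseudo-isomorphism by checking that it carries the sixteen centers $C_i({\bf a})$ to the centers $C_{\sigma(i)}(w({\bf a}))$ compatibly with the prescribed permutation of the $E_i$, and that neither the map nor its inverse contracts a prime divisor. By Proposition~\ref{NSauto} the induced action on the bilattice is then precisely the prescribed automorphism, which closes the argument. As in the previous theorem, the factorisation formula of \cite{CDT2017}---which continues to apply in the higher-dimensional setting---should keep these composition and center-tracking computations tractable.

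The step I expect to be the main obstacle is the construction and verification of $w_{\alpha_0}$ and $w_{\alpha_3}$. In the $A_2^{(1)}+A_2^{(1)}$ case the two $A_2^{(1)}$ factors decoupled, so the long-root reflection acted within one pair $(q_i,p_i)$; here the $A_5^{(1)}$ diagram links the two sequences through the nodes $\alpha_0$ and $\alpha_3$, forcing these reflections genuinely to intertwine $(q_1,p_1)$ with $(q_2,p_2)$. Finding the correct coupled birational form and then confirming that it contracts no divisor while permuting the centers as prescribed is where the bulk of the computation lies.
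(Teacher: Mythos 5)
Your proposal follows essentially the same route as the paper: the paper's proof consists precisely of the explicit list of birational realisations of the generators $w_{\alpha_0},\dots,w_{\alpha_5},\sigma_{01},\sigma_{12}$ on $(q_1,p_1,q_2,p_2;a_0,\dots,a_5;b_1,b_2)$, whose structure you correctly anticipate — additive shifts $q_i\mapsto q_i\pm a_k/p_i$ for the four single-hyperplane roots, involutions read off the lattice action for the Dynkin automorphisms, and coupled rational maps for $w_{\alpha_0}$ and $w_{\alpha_3}$ (indeed the paper's $w_{\alpha_0}$ modifies $p_1$ and $q_2$ through the factor $q_1+p_2-b_1$), obtained via the factorisation method of \cite{CDT2017}.
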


\begin{proof}
The following list gives realisation of the generators as birational mappings on 
$$(q_1, p_1,q_2, p_2;a_0,a_0,a_1,a_2,a_3,a_4,a_5;b_1,b_2)\in \C^{14}.$$
\begin{align*}
w_{\alpha_0}:\ &
  p_1  \leftrightarrow  \disp \frac{(q_1+p_2-b_1)p_1-a_0}{q_1+p_2-b_1},\quad
  q_2  \leftrightarrow \disp \frac{(q_1+p_2-b_1)q_2+a_0}{q_1+p_2-b_1},\\
  &a_5 \leftrightarrow a_0+a_5\quad
  a_0 \leftrightarrow -a_0,\quad
  a_1 \leftrightarrow a_0+a_1,\\
w_{\alpha_1}:\ &
  p_2  \leftrightarrow p_2-a_1q_2^{-1},\\
  &a_0 \leftrightarrow a_0+a_1,\quad
  a_1 \leftrightarrow -a_1,\quad
  a_2 \leftrightarrow a_1+a_2\\
w_{\alpha_2}:\ &
  q_2  \leftrightarrow q_2+a_2p_2^{-1},\\
  &
  a_1 \leftrightarrow a_1+a_2,\quad
  a_2 \leftrightarrow -a_2\quad
  a_3 \leftrightarrow a_2+a_3,\\
w_{\alpha_3}:\ &
  q_1  \leftrightarrow \disp \frac{(q_2+p_1-b_2)q_1+a_3}{q_2+p_1-b_2},\quad
  p_2  \leftrightarrow  \disp \frac{(q_2+p_1-b_2)p_2-a_3}{q_2+p_1-b_2},\\
  &a_2 \leftrightarrow a_2+a_3\quad
  a_3 \leftrightarrow -a_3,\quad
  a_4 \leftrightarrow a_3+a_4,\\  
w_{\alpha_4}:\ &
  p_1  \leftrightarrow p_1-a_4 q_1^{-1},\\
  &a_3 \leftrightarrow a_3+a_4,\quad
  a_4 \leftrightarrow -a_4,\quad
  a_5 \leftrightarrow a_4+a_5,\\
w_{\alpha_5}:\ &
  q_1  \leftrightarrow q_1+a_5 p_1^{-1},\\
  &
  a_4 \leftrightarrow a_4+a_5,\quad
  a_5 \leftrightarrow -a_5\quad
  a_0 \leftrightarrow a_0+a_5,\\
\sigma_{01}:\ &
  q_1  \leftrightarrow -q_2-p_1+b_2,\quad
  p_1  \leftrightarrow p_2,\quad  
  q_2  \leftrightarrow -q_1-p_2+b_1, \\&
  a_0 \leftrightarrow -a_1,\quad
  a_2 \leftrightarrow -a_5,\quad
  a_3 \leftrightarrow -a_4,\quad
  b_1 \leftrightarrow b_2,\\
\sigma_{12}:\ & 
  q_1  \leftrightarrow p_1,\quad p_2  \leftrightarrow q_2,\\&
  a_0 \leftrightarrow -a_3,\quad
  a_1 \leftrightarrow -a_2,\quad
  a_4 \leftrightarrow -a_5,\quad
  b_1 \leftrightarrow b_2
\end{align*}
\end{proof}

The pull-back action of $\varphi^*$ on the root lattice is
\begin{align}
(\alpha_0,\dots,\alpha_5)\mapsto
(\alpha_1+\alpha_2, \alpha_3+\alpha_4,-\alpha_4,\alpha_4+\alpha_5,\alpha_0+\alpha_1, -\alpha_1)
\end{align}
and written by the generators as
\begin{align}
\varphi= w_{\alpha_5}\circ w_{\alpha_2}\circ \sigma_{01} \circ \sigma_{12}.
\end{align}
Its action on the variables becomes
\begin{align}\nonumber
&\left(q_1,p_1,q_2,p_2; a_0,a_1,a_2,a_3,a_4,a_5;b_1,b_2\right)\\
\mapsto&
\Big(-q_1-p_2+b_1+\frac{a_1}{q_2},q_2,
-q_2-p_1+b_2+\frac{a_4}{q_1},q_1; \\
&\quad\ 
a_1+a_2,a_3+a_4,-a_4,a_4+a_5,a_0+a_1,-a_1; b_1,b_2 \Big),\nonumber
\end{align}
which is the non-autonomous version of $\varphi$. 
It is easy to see that $(\varphi^4)^*$ is a translation on the root lattice
as
\begin{align*}
&\varphi^4:(\alpha_0,\dots,\alpha_5)\mapsto
(\alpha_0,\dots,\alpha_5)+\delta (0,1,-1,0,1,-1).
\end{align*}

\begin{remark}\label{twin}
It is highly nontrivial to find the root basis. For example, since the difference of decomposition of the anti-canonical divisor between 
the $A_2^{(1)}+A_2^{(1)}$ case and the $A_5^{(1)}$ case is just
exchange of $H_{q_1}$ and $H_{q_2}$, 
for the $A_2^{(1)}+A_2^{(1)}$ variety, 
the $A_5^{(1)}$ root system with the basis:
\begin{align*}
&\begin{array}{lll}
\alpha_0=H_{q_2}+H_{p_2}-E_{3,4,9,10},
&\alpha_1=H_{q_1}-E_{15,16},
&\alpha_2=H_{p_2}-E_{5,6},\\
\alpha_3=H_{p_1}+H_{q_1}-E_{1,2,11,12},
&\alpha_4=H_{q_2}-E_{7,8},
&\alpha_5=H_{p_2}-E_{13,14}
\end{array}\\
&\begin{array}{lll}
\check{\alpha}_0=h_{p_1}+h_{q_1}-e_{1,2,3,4},
&\check{\alpha}_1=h_{p_2}-e_{15,16},
&\check{\alpha}_2=h_{q_1}-e_{5,6},\\
\check{\alpha}_3=h_{q_2}+h_{p_2}-e_{9,10,11,12},
&\check{\alpha}_4=h_{p_1}-e_{7,8},
&\check{\alpha}_5=h_{q_2}-e_{13,14}.
\end{array}.
\end{align*}
also satisfies Condition (a) and (b) for Cremona isometries. However, it does not satisfy Condition (c).
Actually, $w_{\alpha_1}$ acts to an effective divisor $E_{16}$ as $E_{16}\mapsto H_{q_1}-E_{15}$, but
$H_{q_1}-E_{15}$ is not effective. Similarly, for the $A_5^{(1)}$
variety, the $A_2^{(1)}+A_2^{(1)}$ root system with the basis:
\begin{align*}
&\begin{array}{lll}
\alpha_0^{(1)}=H_{q_2}+H_{p_1}-E_{1,2,3,4}, 
&\alpha_1^{(1)}=H_{p_1}-E_{5,6}, 
&\alpha_2^{(1)}=H_{q_1}-E_{7,8}, \\
\alpha_0^{(2)}=H_{p_2}+H_{q_1}-E_{9,10,11,12},
&\alpha_1^{(2)}=H_{p_2}-E_{13,14}, 
&\alpha_2^{(2)}=H_{q_2}-E_{15,16}
\end{array}\\
&\begin{array}{lll}
\check{\alpha}_0^{(1)}=h_{q_2}+h_{p_1}-e_{1,2,3,4}, 
&\check{\alpha}_1^{(1)}=h_{q_2}-e_{5,6},
&\check{\alpha}_2^{(1)}=h_{p_1}-e_{7,8},\\
\check{\alpha}_0^{(2)}=h_{q_1}+h_{p_2}-e_{9,10,11,12},
&\check{\alpha}_1^{(2)}=h_{q_1}-e_{13,14},
&\check{\alpha}_2^{(2)}=h_{p_2}-e_{15,16}
\end{array}.
\end{align*}
also satisfies Condition (a) and (b), but does not satisfy (c).
\end{remark}

\section{Continuous flow}

As commented in Introduction, the conserved quantities $I_1$ and $I_2$ in \eqref{I12} or \eqref{I12.2} give Hamiltonian flows commuting with each other.
In this section we consider its non-autonomous version. That is, we consider Hamiltonian system of the form
\begin{align}\label{NAH} 
 \frac{dq_1}{dt}=\frac{\partial I}{\partial p_1}, &\quad  \frac{dp_1}{dt}=- \frac{\partial I}{\partial q_1},& 
 \frac{dq_2}{dt}=\frac{\partial I}{\partial p_2}, &\quad  \frac{dp_2}{dt}=- \frac{\partial I}{\partial q_2}
\end{align}
which is regular on a family of surfaces $\cX_{\bf a} \setminus {\mathcal D}$, where ${\mathcal D}$ is the support set of the singular anti-canonical divisor $\sum_i m_i D_i$.  

To find non-autonomous Hamiltonian, we use a technique used by Takano and his collaborators in \cite{ST1997, MMT1999} and by Sasano-Yamada in \cite{SY2007}. We start from general polynomial $I(q_1,p_1,q_2,p_2)$ of order $(2,2,2,2)$ and assume the Hamilton system \eqref{NAH} to be holomorphic on $\cX_{\bf a} \setminus {\mathcal D}$. \\

\begin{theorem}
Case $A_2^{(1)}+A_2^{(1)}$: 
Let $I_1^{{\rm NA}}$ and $I_2^{{\rm NA}}$ be defined as
\begin{align*}
I_1^{{\rm NA}}=&H_{{\rm IV}}(-q_1, p_1;a_1^{(1)},a_2^{(1)}; b^{(1)})= q_1p_1(q_1+p_1-b^{(1)})-a_1^{(1)} q_1+a_2^{(1)}p_1\\
I_2^{{\rm NA}}=&H_{{\rm IV}}(-q_2, p_2;a_1^{(2)},a_2^{(2)}; b^{(2)})=
 p_2q_2(p_2+q_2-b^{(2)})-a_1^{(2)} q_2+a_2^{(2)}p_2,
\end{align*}
where
\begin{align*}
H_{{\rm IV}}(q, p;\alpha, \beta;t)=& qp(p-q-t)+\alpha q+\beta p
\end{align*}
is the Hamiltonian of the forth Painlv\'e equation \cite{Okamoto1980}.
Then the Hamiltonian system \eqref{NAH} with $I=I_i^{{\rm NA}}$, i=1,2, and
\begin{align}
\frac{db^{(j)}}{dt}=&\left\{ \begin{array}{ll} \lambda^{(j)}:=a_0^{(j)}+a_1^{(j)}+a_2^{(j)} \quad& (\mbox{{\rm if} $i=j$})\\[2mm]
0&(\mbox{{\rm if} $i\neq j$})\end{array}\right.
\end{align}
is regular on $\cX_{\bf a} \setminus {\mathcal D}$.\\

\noindent Case $A_5^{(1)}$: 
Let $I_2^{{\rm NA}}$ be defined as
\begin{align*}
&H_{{\rm V}}(q_1,p_1;a_5,a_4,-\frac{1}{2}(a_0+a_2+a_4-a_1-a_3-a_5); b_1,-b_2)\\
&+H_{{\rm V}}(q_2,p_2;a_2,a_1,\frac{1}{2}(a_0+a_2+a_4-a_1-a_3-a_5); b_2,-b_1)-2q_1p_1q_2p_2,
\end{align*}
where
\begin{align*}
H_{{\rm V}}(q,p;\alpha,\beta,\gamma;s,t)=& pq(p+t)(q-s)+\alpha tq+\beta sp+\gamma pq
\end{align*}
is the Hamiltonian of the fifth Painlv\'e equation \cite{Okamoto1980}.
Then the Hamiltonian system \eqref{NAH} with $I=I_2^{{\rm NA}}$ and
\begin{align}
\frac{db_j}{dt}=& \frac{1}{2}\lambda b_j =\frac{1}{2}(a_0+a_1+\cdots+a_5)b_j, \quad (j=1,2)\label{bd_evo}
\end{align}
is regular on $\cX_{\bf a} \setminus {\mathcal D}$.
\end{theorem}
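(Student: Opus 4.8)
The plan is to follow the Takano--Sasano--Yamada holomorphicity method: one starts from a general polynomial $I$ of multidegree $(2,2,2,2)$ in $(q_1,p_1,q_2,p_2)$ together with as-yet-undetermined parameter flows $db^{(j)}/dt$, forms the augmented vector field
\begin{align*}
X=\sum_{i=1}^{2}\left(\frac{\partial I}{\partial p_i}\frac{\partial}{\partial q_i}-\frac{\partial I}{\partial q_i}\frac{\partial}{\partial p_i}\right)+\sum_{j}\frac{db^{(j)}}{dt}\frac{\partial}{\partial b^{(j)}},
\end{align*}
and imposes that $X$ extend to a holomorphic vector field on $\cX_{\bf a}\setminus\mathcal{D}$. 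On the affine chart $\C^4$ this is automatic because $I$ is polynomial, so the content lies entirely in the charts produced by compactification and by the sixteen blow-ups. The key simplification is that poles of $X$ along $\mathcal{D}$ are permitted; reading off $\mathcal{D}$ from \eqref{KXdecomposition} (resp.\ \eqref{KXdecomposition2}), its support already contains the proper transforms of every divisor at infinity of $(\P^1)^4$ and of every exceptional divisor \emph{except} the innermost ones $E_4,E_6,E_8,E_{12},E_{14},E_{16}$. Hence holomorphicity need only be verified along these six divisors, and the symmetry $\sigma^{(12)}$ (resp.\ $(q_1,p_1)\leftrightarrow(q_2,p_2)$) halves the work.

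For each such divisor I would pass to the corresponding chart $U_4,U_6,\dots$ whose coordinates are given explicitly in \S4 as rational functions of $(q_1,p_1,q_2,p_2)$ and $\bf a$. Transporting $X$ by the chain rule, $dw/dt=X_0(w)+\sum_j (db^{(j)}/dt)\,\partial w/\partial b^{(j)}$ with $X_0$ the naive field, and re-expressing in the chart coordinates, the negative powers of the exceptional coordinate $u_i$ are the only source of poles. Requiring their residues to vanish gives a finite system of algebraic relations among the coefficients of $I$ and the $db^{(j)}/dt$; solving it pins down $I$ and the flow uniquely up to the choice of $\mathcal{D}$.

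In the $A_2^{(1)}+A_2^{(1)}$ case the centers $C_1,\dots,C_8$ live over the first $\P^1\times\P^1$ factor and $C_9,\dots,C_{16}$ over the second, so the problem decouples into two independent two-dimensional blocks. Each block is precisely Okamoto's space of initial conditions for the fourth Painlev\'e equation, and the holomorphicity conditions reproduce $I_i^{\rm NA}=H_{\rm IV}(-q_i,p_i;a_1^{(i)},a_2^{(i)};b^{(i)})$ together with $db^{(i)}/dt=\lambda^{(i)}$ (the conjugate block staying frozen). The $A_5^{(1)}$ case is genuinely coupled: centers such as $C_3:\,q_1+p_2-b_1=0$ mix the two blocks, so no product structure is available and the two Painlev\'e V pieces must be made regular simultaneously. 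Imposing holomorphicity in $U_4,U_8,U_{12},U_{16}$ is exactly what forces the cross-term $-2q_1p_1q_2p_2$ in $I_2^{\rm NA}$ and the multiplicative flow $db_j/dt=\tfrac12\lambda\,b_j$.

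The main obstacle is the cascade of blow-ups recorded in \eqref{inclusion2}. To reach a bottom chart like $U_4$ one composes the four coordinate changes associated with $E_1\supset E_2\supset E_3\supset E_4$; at each stage the transported field acquires more singular terms, and one must check that the proposed $I^{\rm NA}$ and $b$-flow cancel the poles at the deepest level rather than merely at the first. For $A_5^{(1)}$ the cross-term is the most delicate ingredient: verifying that $-2q_1p_1q_2p_2$ simultaneously removes the leftover poles of both coupled Painlev\'e V blocks in the innermost charts, without generating new ones, is the crux of the computation.
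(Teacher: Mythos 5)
Your proposal follows essentially the same route as the paper: the Takano/Sasano--Yamada holomorphy method, starting from a general polynomial of multidegree $(2,2,2,2)$ with undetermined parameter flows, transporting the augmented vector field into the blow-up charts by the chain rule (including the $db^{(j)}/dt$ contributions, exactly as in the paper's $dv_3/dt$ computation), and checking regularity in the innermost charts such as $U_4$. Your observation that \eqref{KXdecomposition} and \eqref{KXdecomposition2} reduce the verification to the six divisors $E_4,E_6,E_8,E_{12},E_{14},E_{16}$ is correct and consistent with the computation the paper sketches but omits.
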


\begin{proof}
The proof is straightforward but long and we omit the details, but
for example, in the case $A_5^{(1)}$, on the coordinates $U_4$ in Theorem~\ref{SIC},
the ODE becomes regular i.e.
\begin{align*}
\frac{dq_1}{dt}=&\frac{f_1(q_1,v_2,u_4,v_4)}{2 (-1 + v_2 u_4)},
&\frac{dv_2}{dt}=&\frac{f_2(q_1,v_2,u_4,v_4)}{2 (-1 + v_2 u_4)},\\
\frac{du_4}{dt}=&\frac{f_3(q_1,v_2,u_4,v_4)}{2 (-1 + v_2 u_4)},
&\frac{dv_4}{dt}=&\frac{f_4(q_1,v_2,u_4,v_4)}{2 (-1 + v_2 u_4)},
\end{align*}
where $f_i(q_1,v_2,u_4,v_4)$'s are polynomials of $q_1,v_2,u_4,v_4$.
Note that, since $b_1$ depends on $t$ by \eqref{bd_evo}, $dv_3/dt$ is computed as
$$\frac{dv_3}{dt}= \frac{1}{u_1}\left(\frac{dq_1}{dt}  + \frac{dp_2}{dt} - \frac{db_1}{dt}\right)
 - \frac{q_1 + p_2 - b_1}{u_2^2} \frac{du_2}{dt}.$$

\end{proof}

\begin{remark}
\begin{enumerate}
\item If we set the parameters $a_i$'s as original autonomous one, $I_i^{{\rm NA}}$ recovers $I_i$.
\item In Case $A_5^{(1)}$, $I_2^{{\rm NA}}$ is the unique polynomial of degree $(2,2,2,2)$
except for the constant term which gives a regular Hamiltonian flow with \eqref{bd_evo}. 
\item
The Hamilton system with $I=I_2^{{\rm NA}}$ for Case $A_5^{(1)}$ is slightly modified version of the $A_5^{(1)}$ member in Noumi-Yamada's higher order Pailev\'e equations of type $A_l^{(1)}$ (original one is proposed in \cite{NY1998} and modified version is  proposed in \cite{SY2007}. The latter also gives a discreption as a Hamiltonian system of coupled $P_{{\rm V}}$ equations). Indeed, setting
$f_0=b_1 - q_1 - p_2$, $f_1 = q_2$, $f_2 = p_2$, $f_3=b_2 - q_2 - p_1$, $f_4 = q_1$ and $f_5 = p_1$, we have 
\begin{align*}
\frac{df_i}{dt}=&f_i(-f_{i+1}f_{i+2}-f_{i+1}f_{i+4}-f_{i+3}f_{i+4}+f_{i+2}f_{i+3}+f_{i+2}f_{i+5}+f_{i+4}f_{i+5})\\
&-\frac{1}{2}(a_{i}+a_{i+2}+a_{i+4}-a_{i+1}-a_{i+3}-a_{i+5})f_i+a_i(f_i+f_{i+2}+f_{i+4})
\end{align*}
for $i=0,1,\dots,5$,
where indices $0,1,\dots,5$ are regarded as elements of $\Z/6\Z$.  
\end{enumerate}
\end{remark}

\section{Concluding remarks}

In this paper we investigated two integrable 4-dimensional mappings and constructed
the space of initial conditions on the level of pseudo-auto/isomorphisms. 

The deautonomised version of the first mapping is turned out to be a B\"acklund transformation for the direct product of the fourth Painlev\'e equation with itself
and the symmetry group is $A_2^{(1)}+A_2^{(1)}$ affine Weyl group. This situation is easily generalised to $X_l^{(1)}+X_m^{(1)}$ affine Weyl group, where $X_l^{(1)}$ and $X_m^{(1)}$
are affine Weyl subgroup in $E_8^{(1)}$ appearing in Sakai's classification of two-dimensional discrete Painlev\'e equations, i.e. $X=A, D, E$ ,$l,m=0,1,2,3,4,5,6,7,8$.
In this case the variety is almost (except intersection points of centers of blowups as Remark~\ref{int_centers}) the direct product of Sakai studied generalised Halphen surfaces \cite{Sakai2001}. Here, it is allowed that additive, multiplicative and elliptic difference systems are mixed but independently for $2+2$ variables.  

The second mapping was obtained just by switching two terms in the first mapping,
but this simple surgery generates a variety with a different type symmetry.   
On the level of cohomology, the only difference is the decompositions of the anti-canonical divisors as \eqref{KXdecomposition} and \eqref{KXdecomposition2}.
Moreover, as commented in Remark~\ref{twin}, their symmetries are closely related with each other. We expect that there are many such ``twin'' phenomena.

\subsection*{Acknowledgement}
T.~T. was supported by the Japan Society for the
Promotion of Science, Grand-in-Aid (C) (17K05271).

\end{document}